\begin{document}
\numberwithin{equation}{section} 
\def\note#1{\marginpar{\small #1}}

\def\tens#1{\pmb{\mathsf{#1}}}
\def\vec#1{\boldsymbol{#1}}

\def\norm#1{\left|\!\left| #1 \right|\!\right|}
\def\fnorm#1{|\!| #1 |\!|}
\def\abs#1{\left| #1 \right|}
\def\ti{\text{I}}
\def\tii{\text{I\!I}}
\def\tiii{\text{I\!I\!I}}

\def\pard#1{\partial_{#1}}

\def\diver{\mathop{\mathrm{div}}\nolimits}
\def\grad{\mathop{\mathrm{grad}}\nolimits}
\def\Div{\mathop{\mathrm{Div}}\nolimits}
\def\Grad{\mathop{\mathrm{Grad}}\nolimits}

\def\tr{\mathop{\mathrm{tr}}\nolimits}
\def\cof{\mathop{\mathrm{cof}}\nolimits}
\def\det{\mathop{\mathrm{det}}\nolimits}

\def\lin{\mathop{\mathrm{span}}\nolimits}
\def\pr{\noindent \textbf{Proof: }}
\def\pp#1#2{\frac{\partial #1}{\partial #2}}
\def\dd#1#2{\frac{\d #1}{\d #2}}

\def\T{\mathcal{T}}
\def\R{\mathcal{R}}
\def\Re{\mathbb{R}}
\def\bx{\vec{x}}
\def\be{\vec{e}}
\def\bef{\vec{f}}
\def\bb{\vec{b}}
\def\bec{\vec{c}}
\def\bs{\vec{s}}
\def\ba{\vec{a}}
\def\bn{\vec{n}}
\def\bphi{\vec{\varphi}}
\def\btau{\vec{\tau}}
\def\bc{\vec{c}}
\def\bg{\vec{g}}

\def\bW{\tens{W}}
\def\bA{\tens{A}}
\def\bT{\tens{T}}
\def\bD{\tens{D}}
\def\bF{\tens{F}}
\def\bB{\tens{B}}
\def\bC{\tens{C}}
\def\bV{\tens{V}}
\def\bS{\tens{S}}
\def\bI{\tens{I}}
\def\bi{\vec{i}}
\def\bv{\vec{v}}
\def\bfi{\vec{\varphi}}
\def\bk{\vec{k}}
\def\b0{\vec{0}}
\def\bom{\vec{\omega}}
\def\bw{\vec{\omega}}
\def\bz{\vec{z}}
\def\p{\pi}
\def\bu{\vec{u}}
\def\bq{\vec{q}}

\def\ID{\mathcal{I}_{\bD}}
\def\IP{\mathcal{I}_{p}}
\def\Pn{(\mathcal{P})}
\def\Pe{(\mathcal{P}^{\eta})}
\def\Pee{(\mathcal{P}^{\varepsilon, \eta})}
\def\dx{\; \mathrm{d}x}
\def\dt{\; \mathrm{d}t}

\def\Ln#1{L^{#1}_{\bn}}

\def\Wn#1{W^{1,#1}_{\bn}}

\def\Lnd#1{L^{#1}_{\bn, \diver}}

\def\Wnd#1{W^{1,#1}_{\bn, \diver}}

\def\Wndm#1{W^{-1,#1}_{\bn, \diver}}

\def\Wnm#1{W^{-1,#1}_{\bn}}

\def\Lb#1{L^{#1}(\partial \Omega)}

\def\Lnt#1{L^{#1}_{\bn, \btau}}

\def\Wnt#1{W^{1,#1}_{\bn, \btau}}

\def\Lnd#1{L^{#1}_{\bn, \btau, \diver}}

\def\Wntd#1{W^{1,#1}_{\bn, \btau, \diver}}

\def\Wntdm#1{W^{-1,#1}_{\bn,\btau, \diver}}

\def\Wntm#1{W^{-1,#1}_{\bn, \btau}}

\newtheorem{Theorem}{Theorem}[section]
\newtheorem{Example}{Example}[section]
\newtheorem{Lemma}{Lemma}[section]
\newtheorem{Remark}{Remark}[section]
\newtheorem{Definition}{Definition}[section]
\newtheorem{Corollary}{Corollary}[section]


\title[Mean field dependent dynamics]{Bellman systems with mean field dependent dynamics}\thanks{A. Bensoussan acknowledges the support of National Science Foundation grants DMS 1303775 and 1612880 and the Hong Kong SAR Research Grant  Council GRF 500113 and 11303316.  Miroslav Bul\'{\i}\v{c}ek thanks
to Hausdorff center for mathematics at Unviversity of Bonn and to the Czech Science Foundation (grant no. 16-03230S)}

\author[A. Bensoussan]{Alain Bensoussan}
\address{Jindal School of Management, International Center for Decision and Risk Analysis\\
University of Texas at Dallas\\
800 W. Campbell Rd, SM30
Richardson, TX 75080-3021, USA
}

\address{Department SEEM\\
City University of Hong Kong\\
Hung Hom, Kowloon, Hong Kong}
\email{axb046100@utdallas.edu}

\author[M. Bul\'{\i}\v{c}ek]{Miroslav Bul\'{\i}\v{c}ek}

\address{Mathematical Institute, Faculty of Mathematics and Physics, Charles University\\ Sokolovsk\'{a} 83,
186 75 Praha 8, Czech Republic}
\email{mbul8060@karlin.mff.cuni.cz}

\author[J. Frehse]{Jens Frehse}

\address{Institute for Applied Mathematics, Department of Applied Analysis \\
Endenicher Allee 60, 53115 Bonn, Germany}
\email{erdbeere@iam.uni-bonn.de}

 \keywords{Stochastic games; Bellman equation; mean filed equation; nonlinear elliptic
equations; weak solution; maximum principle}
 \subjclass[2000]{35J60, 35K55, 35J55, 35B65}

\begin{abstract}
We deal with nonlinear elliptic and parabolic systems that are the Bellman like systems associated to stochastic differential games with mean field dependent dynamics. The key novelty of the paper is that we allow heavily mean field dependent dynamics. This in particular leads to a system of PDE's with critical growth, for which it is rare to have an existence and/or regularity result. In the paper, we introduce a structural assumptions that cover many cases in stochastic differential games with mean filed dependent dynamics for which we are able to establish the existence of a weak solution. In addition, we present here a completely new method for obtaining the maximum/minimum principles for systems with critical growths, which is a starting point for further existence and also qualitative analysis.
\end{abstract}

\dedicatory{Paper dedicated to Professor Philippe~G.~Ciarlet}

\maketitle

\section{Introduction}

In recent literature, mainly scalar Bellman equations which are coupled with a Fokker--Planck equation are studied ad we refer to starting paper \cite{LasLi06} or to a survey \cite{GoPi15}, se also \cite{Gogo}. These equations model a Nash game with a large number of players behaving similarly, so that the decision can be approximate by a single decision make (a representative agent). So we have a scalar Bellman equation. The present paper considers a model suggested by Bensoussan, accompanied by co-authors, \cite{GoBe}, where the decision of finite number of $N$ players of large population of Nash-game-players are approximated by $N$ representative agents. So the Bellman system is a system of parabolic equations coupled with a forward backward mean field equation.

In principle, the dependence of the coefficients of the data in the nonlinearities of the equation  may be a functional one. But in order to have a first insight in the difficulties and in order to simplify the presentation, we confine to a \emph{point-wise} dependence of Hamiltonians with respect to the mean field variable here. Generally, the obtaining the existence of solution of the Bellman mean field dependent system with growth with respect to the mean field variable in nonlinearities is the critical subject. Without additional assumptions only the  poor growth behaviour is permitted. For the case of scalar Bellman equations, for obtaining the global solvability, \cite{Por13,Po2} gives a quite exhaustive analysis of the growth conditions for the Hamiltonians concerning the dependence of $\nabla u$ ($u$ is a generalized value function) and the mean field $m$, which appears in the pay off functional.

In comparison, in the present paper, we restrict ourselves to Hamiltonians which grow quadratically with respect to $\nabla u$, which is from the point of view of PDE analysis the most interesting case. A related paper \cite{BeBrFr16}, where a mean field dependence of the pay off functional is assumed, but no such a dependence of $m$ in the dynamics of the system is considered. Also in \cite{BeBrFr16}, the growth properties of the data with respect to $m$, were crucial to obtain global solvability of the problem.

In this paper we goes much beyond the scalar theory and the theory developed in \cite{BeBrFr16} and obtain the existence result for much larger class of problems. As a key tool for the existence of a solution we use the method of sub and super solution used in the context of Bellman systems in \cite{BeBuFr12}.

\section{Derivation of the system}
\label{S1}

In this paper we study a system of partial differential equations which arises as a necessary condition of a Nash--Point--problem for Vlasov--McKean--functionals
\begin{equation}
\label{1.1}
\mathcal{J}^i(\bv)= \int_0^T \int_{\Omega} m(\bv) f^i(\cdot, \bv, m(\bv)) \dx \dt + \int_{\Omega} u_T^i m(T,\cdot) \dx,
\end{equation}
where $(0,T)$ is a given time interval, $\Omega\subset \mathbb{R}^d$ is a cube $(0,1)^d$ and $Q:=(0,T)\times \Omega$ is a space-time cylinder. The function $\bv:=(v^1,\ldots, v^N)$ with $N\in \mathbb{N}$ is the vector of the control functions, i.e.,
$$
v^i:Q \to \mathbb{R}^M
$$
is the control of the $i$-th player, where $i=1, \ldots, N$ and $M\in \mathbb{N}$ is given. For every $i=1,\ldots, N$ the function
$$
f^i:Q \times \mathbb{R}^{MN} \times \mathbb{R} \to \mathbb{R}
$$
is the so-called pay off function of the $i$-th player. Finally, the function
$$
m:  Q \to \mathbb{R},
$$
is the so-called mean field variable. This means that for a given $\bv$, it is a weak nonnegative solution of the following parabolic equation (``mean field equation")
\begin{equation}
\label{1.2}
\partial_t m - \Delta m + \diver \left(m \bg(\cdot, \bv, m)\right) =0
\end{equation}
that is supposed to be satisfied in the space-time cylinder  $Q$,  is completed be spatially periodic boundary conditions (with respect to the unit cube $\Omega$) and by the initial data
\begin{equation}
\label{1.3} m(0)=m_0\ge 0 \qquad \textrm{ in }\Omega.
\end{equation}
Here,
$$
\bg: Q \times \mathbb{R}^{NM} \times \mathbb{R} \to \mathbb{R}^d
$$
is a given mapping and we postpone the discussion about its structure to the end of the section.

For a bounded $\bv$, under natural assumptions on the mapping $\bg$, the problem \eqref{1.2}--\eqref{1.3} has a unique solution
\begin{equation}\label{start}
m\in L^{\infty}(0,T; L^{\infty}(\Omega))\cap L^2(0,T; W^{1,2}_{per}(\Omega))\cap W^{1,2}(0,T; (W^{1,2}_{per}(\Omega))^*),
\end{equation}
which allows us to define \eqref{1.1}. Since the mean field variable depends on the choice of $\bv$, we will frequently also write $m=m(\bv)$ whenever the couple $(m,\bv)$  solves \eqref{1.2}, which should not be understood as an algebraic relation.

In \eqref{start} we use the standard notation for Bochner, Sobolev and Lebesgue spaces and the subscript ``per" indicating the periodicity with respect to $\Omega$ and this notation will be kept through the whole paper. In what follows, we also omit writing the dependence of function on $(t,x)$ explicitly to shorten all formulae, i.e., we use the following abbreviation $\bef(m,\bv)$ for $\bef(t,x,m(t,x),\bv(t,x))$ or for $\bef(\cdot,m,\bv)$ in what follows, where $\bef:=(f^1, \ldots, f^N)$.

Having the mean field variable $m$ and the control function $\bv$, we can define a ``pre-version" of the so-called Bellman system for a further function $\bu=(u^1,\ldots, u^N):Q \to \mathbb{R}^N$ via the backward parabolic system
\begin{equation}\label{1.6}
\begin{split}
-\partial_t{\bu} - \Delta \bu &= \bef(\bv,m) + m\bef_m(\bv,m)\\
&\qquad  \nabla \bu \left[ \bg(\bv,m) + m\bg_{m} (\bv,m)\right]=:L(m,\bv,\nabla \bu),
\end{split}
\end{equation}
which is supposed to be satisfied in $Q$, equipped with the $\Omega$-periodic boundary conditions and completed by the initial condition
$$
\bu(T)=\bu_T \qquad \textrm{ in }\Omega.
$$
We call this system the ``pre-Bellman equation" since $\bv$ has not been replaced by a feed back formula
\begin{equation}\label{1.7}
\bv(t,x):=\bw (t,x,\nabla \bu(t,x),m(t,x))
\end{equation}
yet and we introduce the meaning of \eqref{1.7} in the next subsection. Moreover, $L=(L^1,\ldots, L^N)$ are the so-called modified Lagrangians\footnote{We use here the word modified since they differs from standard Lagrangians in Bellman systems, which is however here caused by the fact that $\bef$ and $\bg$ depend on $m$.}.

It is evident, that \eqref{1.2} and \eqref{1.6} does not form a closed problem and one needs to connect $\bv$ with $m$ and $\nabla \bu$ via some relationship. It will be shown in the next subsection that the necessary condition for classical Nash--Point problem may serve as such constraint. In addition assuming further certain qualitative properties of $\bef$, we will be able to give a good meaning to the feed back formula \eqref{1.7} and thus to  avoid the presence of the control variable $\bv$ in the analysis. The main goal of the paper is to introduce certain structural assumptions on $\bef$ and $\bg$ such that they describe very general mean field dependent Bellman system on one hand, and for which we can establish the existence of a weak solution on the other hand.

\subsection{Derivation of the full system} \label{SS1.1}
Finally, we need to close the problem \eqref{1.2} and \eqref{1.6} by an algebraic condition which is necessary condition for the Nash point of functionals  $J$'s. The classical Nash--Point problem reads: For given $\bu_T$, find $\bv\in L^{\infty}(0,T; L^{\infty}(\Omega;\mathbb{R}^{MN}))$ such that
\begin{equation}
\label{1.4}
J^i(\bv)\le J^i(v_1,\ldots, v_{i-1},z,v_{i+1},\ldots, v_N)
\end{equation}
for all $z \in L^{\infty}(0,T; L^{\infty}(\Omega;\mathbb{R}^{M}))$ and corresponding $m$'s, the solutions\footnote{We may define this also for other $L^p(L^q)$ spaces once the uniqueness of $m(\bv)$ is guaranteed.} to \eqref{1.2} with $\bv$ replaced by $\bz:=(v_1,\ldots, v_{i-1},z,v_{i+1},\ldots, v_N)$.

The classical version treats the case where $f^i$ and $\bg$ do not depend on $m$. In that case, the problem \eqref{1.4} is purely analytical (meaning stochastic free) formulation of a stochastic differential game driven by the dynamics
$$
\frac{d}{dt}\bx=\bg(t,\bx,\bv).
$$
In recent years, interest came up to study cases with $m$-dependence of the pay-off $f^i$ and/or the dynamics $\bg$. From PDE's point of view, this leads to new interesting version of the Bellman system.

Although, it is not know whether the problem \eqref{1.1} admits a Nash--point, we derive in what follows certain necessary conditions that must be fulfilled by the hypothetical Nash--point, which finally allow us to connect the pre-Bellman system \eqref{1.6} with the mean field equation \eqref{1.2} via the feed back formula \eqref{1.7} or its ``equivalent".

Under natural assumption on the data (cf. Section~\ref{SA}) and, say, $\bv\in L^{\infty}(0,T; L^{\infty}(\Omega;\mathbb{R}^{NM}))$, it is easy to see that the Gateux derivatives of the $J^i$ and of $m(\bv)$ exist. For
$$
\bz^i:=(\underset{i-1}{\underbrace{0,\ldots,0}},z,\underset{N-i}{\underbrace{0,\ldots,0}})
$$
with arbitrary smooth $\Omega$-periodic function $z:Q\to \mathbb{R}^M$, we obtain that
$$
M^i:=\left.\frac{d}{ds} m(\bv + s\bz)\right|_{s=0}
$$
with $i=1,\ldots, N$ satisfies
\begin{equation}\label{Gateu}
\left.
\begin{aligned}
&\partial_t M^i - \Delta M^i= \\
&-\diver \left( M^i\bg(\bv,m)+m\bg_{v^i}(\bv,m)\cdot z + mM^i \bg_m(\bv,m)\right)
\end{aligned}
\right\}  \textrm{ in }Q
\end{equation}
and is completed by the initial condition
$$
M^i(0,x)=0 \quad \textrm{ a.e. in }\Omega.
$$
Here, we use the subscript to abbreviate the notion of partial derivative, i.e., $\bg_{\bv}(\bv,m):= \partial_{\bv}\bg(\bv,m)$ and $\bg_m(\bv,m):= \partial_{m}\bg (\bv,m)$.
Furthermore, assuming that $\bv$ is the Nash--equilibrium, we have for all $i=1,\ldots N$ that
\begin{equation}
\label{1.5}
\begin{split}
0&= \left.\frac{d}{ds} J^{i}(\bv+s\bz^i)\right|_{s=0}\\
&= \int_Q  \left(mf^i_{v^i}(\bv,m)\cdot z + M^if^i(\bv,m)+mM^if^i_{m}(\bv,m) \right)\dx \dt\\
&\qquad  + \int_{\Omega} M^i(T)u^i(T) \dx
\end{split}
\end{equation}
for arbitrary smooth $\Omega$--periodic function $z$. Notice that here $m:=m(\bv)$, i.e., $m$ solves \eqref{1.2} with $\bv$. To evaluate the terms not involving explicitly $z$, we use the equations \eqref{1.6} and \eqref{Gateu}. First, multiplying \eqref{Gateu} by $u^i$, integrating over $Q$ and using integration by parts (note that $M^i(0)=0$), we deduce
\begin{equation}\label{Gateu2}
\begin{aligned}
&\int_{\Omega} M^i(T)u^i(T)\dx +\int_{Q} \nabla  M^i \cdot \nabla u^i \dx \dt-\int_{Q}M^i \partial_t u^i \dx \dt\\
&= \int_{Q} ( M^i\bg(\bv,m)+m\bg_{v^i}(\bv,m)\cdot z + mM \bg_m(\bv,m))\cdot \nabla u^i \dx \dt.
\end{aligned}
\end{equation}
Next, multiplying the $i$-th equation in \eqref{1.6} by $M^i$, we observe
\begin{equation}\label{1.68}
\begin{split}
&-\int_{Q}\partial_t u^i M^i\dx\dt +\int_Q \nabla u^i \cdot \nabla M^i\dx \dt  \\
&= \int_QM^i(f^i(\bv,m) + mf^i_m(\bv,m))\dx \dt\\
&\qquad + \int_Q M^i (\bg(\bv,m) + m\bg_{m} (\bv,m))\cdot \nabla u^i\dx \dt.
\end{split}
\end{equation}
Finally, subtracting \eqref{1.68} from \eqref{Gateu2}, we obtain the following identity
\begin{equation}\label{Gateu3}
\begin{aligned}
\int_{\Omega} M^i(T)u^i(T)\dx &= -\int_Q M^i(f^i(\bv,m)+mf^i_m(\bv,m))\dx \dt \\
&\quad +\int_Q m\bg_{v^i}(\bv,m)\cdot z\dx \dt.
\end{aligned}
\end{equation}
Thus, using this relation in the necessary condition \eqref{1.5}, we see that
$$
\int_Q f^i_{v^i}(\bv,m)\cdot z + (\nabla u^i \otimes z) \cdot \bg_{v^i} (\bv,m) \dx \dt =0
$$
for all $i=1,\ldots, N$ and all smooth $\Omega$-periodic $z$. This consequently leads to the necessary compatibility condition
\begin{equation}
\label{1.8}
f^i_{v^i}(\bv,m)  + \nabla u^i \cdot \bg_{v^i} (\bv,m) =0  \qquad \textrm{ in } Q.
\end{equation}

Thus, now we have a closed system od equations. Namely, \eqref{1.2}, \eqref{1.6} and \eqref{1.8} forms a well-defined problem for which we want to establish our analytical result. Indeed, the first one will deal just with \eqref{1.2}, \eqref{1.6} and \eqref{1.8} and lead to the uniform a~priori estimate for $(m,\bv,\bu)$. However, to obtain also the existence result, we shall require that for a given $(m,\nabla \bu)$, we can find a unique $\bv$ solving \eqref{1.8}. For such a solution we define the feed back formula \eqref{1.7} as
$$
\bw(m,\nabla \bu):=\bv
$$
and replacing $\bv$ in \eqref{1.6} by the feed back formula, we obtain
\begin{equation}
\label{1.12}
-\partial_t \bu  - \Delta \bu = H(\nabla \bu, m):=L(m,\bw(m,\nabla \bu),\nabla \bu),
\end{equation}
Similarly, we replace $\bv$ in the mean field equation \eqref{1.2} and obtain the backward forward system
\begin{equation}\label{ff}
\begin{split}
\partial_t m - \Delta m &=-\diver \left(m \bg(\bw(m,\nabla \bu), m)\right) ,\\
-\partial_t \bu  - \Delta \bu &= H(\nabla \bu, m),
\end{split}
\end{equation}
which does not contain a control function $\bv$. Nevertheless, this system is equivalent to \eqref{1.2} and \eqref{1.6} provided that $\bv$ is defined such that it satisfies \eqref{1.8}. The second result of the paper will be therefore established for \eqref{ff}, provided that the feed back formula $\bw$ is well defined.

\subsection{Structural assumptions on $\bef$ and $\bg$}\label{SA}
We keep the notation from the introduction here. Through the whole text, we assume that all partial derivatives $\bef_{\bv}$, $\bg_{\bv}$, $\bef_m$ and $\bg_m$ with respect to $\bv$ and $m$, respectively, exist and together with $\bef$ and $\bg$ are Carath\'{e}odory mappings, i.e., are measurable with respect to $(t,x)$ for all $(\bv,m)$ and for almost all $(t,x)$ they are continuous with respect to $(\bv,m)$. This assumption will not be mentioned explicitly in what follows but we rather assume it implicitly in all statements. In what follows we also assume that $K$ and $C_i$ are positive constants and the same for constants $r,s\ge 0$ which will be used to denote certain powers. Furthermore, we will frequently use $C$ to denote a generic constant that may change from line to line but will depend only on data. In case, it will depend on some important quantity, it will be clearly denoted in the text.

First, we state the assumptions for $\bef:Q\times \mathbb{R}^{NM} \times \mathbb{R} \to \mathbb{R}^N$. We assume that $\bef$ and $\bef_{\bv}$ satisfies the following growth condition
\begin{equation}\label{2.2}
\begin{split}
|\bef(m,\bv)| + m|\bef_{m}(m,\bv)|&\le K(m^r+1)|\bv|^2 + Km^{2s_0},\\
|\bef_{\bv}(m,\bv)| &\le K(m^r+1)|\bv| + Km^{s_0}.
\end{split}
\end{equation}
for all $\bv$ and all $m\ge 0$. In addition, we assume the following one sided estimate for $f^i$: There exists $\alpha\in[0,1)$ such that  for all $i=1,\ldots, N$ there holds
\begin{equation}\label{2.21}
f^i(m,\bv)+mf^i_{m}(m,\bv)\le K\left(1+f^i(m,\bv)+|\bv|^{\alpha+1}+m^{2s_0}\right)
\end{equation}
and also one sided sum coerciveness, i.e., we assume
\begin{equation}\label{2.3}
\sum_{i=1}^N f^i(m,\bv)+mf^i_{m}(m,\bv)\ge C_0(m^r+1)|\bv|^2 - K(m^{2s_0}+1).
\end{equation}
Concerning the structural assumptions on $\bef$ we just assume that
\begin{equation}\label{convex}
\textrm{For all $i$ the function $f^i$ is convex with respect to $v^i$.}
\end{equation}
We also prescribe the behaviour of $\bef$ at $0$, i.e., for all $i=1,\ldots, N$ we assume that
\begin{equation}\label{at0}
f^i(m,\bv)|_{v^i=0}\le K(1+m^{2s_0}+|\bv|^{\alpha+1})
\end{equation}
and finally the coerciveness of $f^i_{v^i}$, i.e., we assume that
\begin{equation}\label{fv}
C_0(m^r+1)|v^i|^2 \le f^i_{v^i}(m,\bv) \cdot v^i + K(1+m^{2s_0}+|\bv|^{\alpha+1}).
\end{equation}

Next, we focus on the assumptions on $\bg$. The standard ones are related to the growth estimates, which will be supposed to be given by
\begin{equation}
\begin{split}
m|\bg_{m}(m,\bv)| + |\bg(m,\bv)|&\le K((m^s+1)|\bv|+m^{s_0}+1),\\
|\bg_{\bv}(m,\bv)|&\le K(m^s+1).
\end{split}\label{2.6}
\end{equation}

The forthcoming structural assumptions on $\bg$ and also on $\bef_{\bv}$ are in fact the key restrictions of the paper.
First, to simplify the further analysis, we shall assume that\footnote{The linearity of $\bg$ with respect to $\bv$ is in fact not a necessary assumption. A more delicate here is just behaviour of $\bg$ with respect to $m$.}
\begin{equation}\label{g-def}
\bg = \sum_{j=1}^N b_1(m)A^j(\cdot)v^j + \bb_0(m),
\end{equation}
where $\bb_0$ does not depend on $(t,x)$. The matrices $A^j:Q \to \mathbb{R}^{d\times M}$ are given functions of $(t,x)$, i.e., $(A^j)_{ik}:=A^j_{ik}$ with $i=1,\ldots, d$ and $k=1,\ldots, M$ and the meaning of $A^j v^j$ is
$$
(A^j v^j)_i:=\sum_{k=1}^M A^{j}_{ik} v^j_k.
$$
Hence, as it is  assumed we have a $d$-dimensional vector function $\bg=(g_1,\ldots, g_d)$. The inhomogeneities  $\bb_0:\mathbb{R}\to \mathbb{R}^d$ and $b_1:\mathbb{R}\to \mathbb{R}$ are given. Concerning the assumptions on functions $b_1$ and $\bb_2$, we require that
\begin{equation}
|b_1(m)|\le K(m^s+1), \qquad |\bb_0(m)|\le K(m^{s_0}+1) \label{as-g1}
\end{equation}
and  for matrices $A^j$ we assume that
\begin{equation}
|A^j(\cdot )|\le K. \label{as-A1}
\end{equation}
Furthermore, one of the essentially required properties of $A^j$ is that they have the same range, i.e., we assume that for all $i,j=1,\ldots, N$, almost all $(t,x)$ and all $\bz \in \mathbb{R}^{Nd}$ there holds
\begin{equation}
|\bz A^j(t,x)|\le C_1|\bz A^i(t,x) |.\label{as-A2}
\end{equation}
For derivatives of $\bb_0$, we need that
\begin{equation}
\begin{split}
|m\partial_m\bb_0(m)|&\le Km(m+1)^{s_0-1},\\
|m^2\partial_{mm}\bb_0(m)|&\le Km^2(m+1)^{s_0-2}. \label{as-g2}
\end{split}
\end{equation}
Finally, for the derivative of $b_1$ with respect to $m$,  we introduce a certain ``smallness assumption": There exists $\delta\in[0,1)$ such that for all $b\in \mathbb{R}_+$ and all $\bv\in \mathbb{R}^{NM}$ there holds
\begin{equation}\label{b1-as}
\begin{split}
&C_1\sqrt{N}\frac{|m\partial_m b_1(m)|}{|b_1(m)|}|\bv| \sum_{i=1}^M |f^i_{\bv^i}|\\
&\qquad \le \sum_{i=1}^N(f^i(m,\bv)+m\partial_m f^i(m,\bv)) + K(1+m^{2s_0}).
\end{split}
\end{equation}
Although the above assumption seems to be complicated, it naturally appears in the first a~priori estimate of the problem. In the next subsection, we shall show that in many cases, the assumption \eqref{b1-as} still allows very general behaviour of all quantities with respect to $m$, namely, the case when $\bf$ can dominate the function $\bg$ in certain sense. In addition, since \eqref{b1-as} may seem to be complicated, we can replace it by
\begin{equation}\label{as-gamma}
|m\partial_m b_1(m)|\le \gamma |b_1(m)| \textrm{ with } \gamma \le \frac{C_0}{2(C_1^2+N^2)}
\end{equation}
since then \eqref{b1-as} follows easily from the previous assumptions.

Please, observe here that if $\bg$ is given by \eqref{g-def}, then \eqref{1.8} reduces to
\begin{equation}
\label{1.81}
f^i_{v^i_{j}}(\bv,m)  + b_1(m) \sum_{k=1}^d \partial_{x_k} u^i A^i_{kj} =0  \qquad \textrm{ in } Q
\end{equation}
which must be valid for all $i=1,\ldots, N$ and all $j=1,\ldots, M$.


The above assumptions are sufficient for establishing formal a~priori estimates. However, for getting also the existence of a weak solution, we need to give a well meaning to the feed back formula, or in other words, we need to guarantee the unique solvability of \eqref{1.81} for given $m$ and $\nabla u$. There can be introduced many assumptions that wold lead to such a goal but we follow here the most standard one, which is the monotone operator approach. For a given $m$, we define the mapping $T:\mathbb{R}^{NM} \to \mathbb{R}^{NM}$ defined by
$$
T(\bv)=\left(\frac{\partial}{\partial v^1} f^1(\bv,m), \ldots,\frac{\partial}{\partial v^N} f^N(\bv,m)\right)
$$
and assume that it is continuous with respect to $(m,\bv)$ and measurable with respect to $(t,x)$ and the strictly monotone,  i.e., for all $\bv \neq \tilde{\bv}$
\begin{equation}
\label{1.11}
(T(\bv)-T(\tilde{\bv}) \cdot (\bv - \tilde{\bv}) >0.
\end{equation}
Then using also the assumption \eqref{fv}, we see that it satisfies
\begin{equation}
\label{1.10}
\frac{T(\nu)\cdot \nu}{|\nu|} \to \infty \qquad \textrm{ as } |\nu| \to \infty
\end{equation}
and from the standard monotone operator theory, we can conclude the existence of a unique $\bv$ solving \eqref{1.81} and also consequently, we obtain that the feed back law \eqref{1.7} is well defined.

\subsection{Prototypical example}
Our prototype example is the following. For $f$, we assume a structure
\begin{equation}
f^i(m,\bv):=(m+1)^r|v^i|^2 + B^i\cdot \bv + K(1+m^{2s_0}),\label{f-pro}
\end{equation}
where $B^i$'s are arbitrary bounded measurable matrices. Next, for $b_1$ we consider
\begin{equation}
b_1(m):=(m+1)^s
\end{equation}
and for $\bb_2$ and matrices $A^i$, we just require that \eqref{as-A1}, \eqref{as-A2} and \eqref{as-g2}. With such a choice, all assumptions \eqref{2.2}--\eqref{as-g2} and also \eqref{1.11} are satisfied ad we shall just to show what is the meaning of \eqref{b1-as}. A very direct computation leads to the necessary condition
$$
sm\le \frac{\gamma}{2N} \left(1+(r+1)m\right),
$$
which is surely satisfied whenever
\begin{equation}
s<\frac{r}{2N}.\label{prot-asss}
\end{equation}

\subsection{Statement of main results}
The main result of the paper is twofold. First, we give the uniform a~priori estimate result which holds for all sufficiently smooth solutions provided that parameters satisfy the assumptions stated above.
\begin{Theorem}\label{T1}
Let $\bg$ and $\bef$ satisfy \eqref{2.2}--\eqref{b1-as}. Then any sufficiently regular solution $(m,\bv,\bu)$ to \eqref{1.2}, \eqref{1.6} and \eqref{1.8} satisfies the following estimate
\begin{equation}\label{T-est}
\begin{split}
&\sup_{t\in (0,T)} \left(\|m(t)\|_{\sigma} + \|\bu(t)\|_{\infty}\right)+\int_{Q}|\nabla \bu|^2 + (m+1)^{\sigma -2}|\nabla m|^2 \dx \dt\\
&+\int_Q m^{2s_0+1}+ (m+1)(m^{r}+1)|\bv|^2 \dx \dt \le C(\|\bu_T\|_{\infty}, \|m_0\|_{\sigma}).
\end{split}
\end{equation}
where
\begin{equation}
\sigma:= r-2s+1 \label{ddsig}
\end{equation}
provided that
\begin{equation}\label{ffres}
0\le \min \left\{\frac{4(2s_0(d+2)-1)_+}{\sigma(d+2)-d-(2s_0-\sigma +1)_+(d+2)} , \frac{2s_0 (d+2)}{\sigma(d+2)-d} \right\}<1.
\end{equation}
and
\begin{equation}\label{crcr}
r\ge2s.
\end{equation}
\end{Theorem}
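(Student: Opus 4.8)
The plan is to derive the a~priori estimate \eqref{T-est} by a carefully ordered sequence of energy-type testings, where the main point is to choose the test functions so that the dangerous critical terms cancel or are absorbed using precisely the structural hypotheses \eqref{2.21}--\eqref{b1-as}. First I would establish the $L^\infty$-bound on $\bu$: summing the equations in \eqref{1.6}, one gets a scalar super/sub-solution structure controlled by the one-sided bound \eqref{2.21} and the sum coerciveness \eqref{2.3}; testing the $i$-th equation by $(u^i-\|\bu_T\|_\infty-Ct)_+$ type functions and using a maximum-principle argument in the spirit of \cite{BeBuFr12} yields $\|\bu(t)\|_\infty\le C(\|\bu_T\|_\infty,\|m_0\|_\sigma)$, once we know a suitable integrability of $m$. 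Since this last ingredient is itself part of the claimed estimate, the argument has to be run as a coupled bootstrap: one first obtains a provisional estimate on $m$ in some low-order space from testing \eqref{1.2}.

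Next I would treat the mean field equation \eqref{1.2}. Testing it by $((m+1)^{\sigma-1}-1)$ (legitimate since $m\ge0$ and $m\in L^\infty$ for the regularized problem) produces on the left $\sup_t\|m(t)\|_\sigma^\sigma$ plus $\int_Q (m+1)^{\sigma-2}|\nabla m|^2$, while the convective term $\int_Q m\bg(\bv,m)\cdot\nabla((m+1)^{\sigma-1})$ is, via \eqref{g-def}, \eqref{as-g1}, \eqref{as-A1}, controlled by $\int_Q (m+1)^{\sigma-1}(m^s+1)|\bv|\,|\nabla m|$ plus lower-order terms; Young's inequality splits off $\frac12\int_Q(m+1)^{\sigma-2}|\nabla m|^2$ and leaves $C\int_Q (m+1)^\sigma (m^s+1)^2|\bv|^2\,$. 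The exponent bookkeeping here is exactly where $\sigma=r-2s+1$ and the constraint $r\ge 2s$ enter: $(m+1)^\sigma(m^s+1)^2\lesssim (m+1)^{r+1}$, so this term matches the good term $\int_Q (m+1)(m^r+1)|\bv|^2$ appearing on the left of \eqref{T-est} and must be absorbed from the Bellman side rather than estimated directly.

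The heart of the proof is then the simultaneous testing of \eqref{1.6}: test the $i$-th equation by $u^i$ itself (or rather by $u^i$ after the $L^\infty$-bound, so it is bounded) and sum over $i$. On the left one gets $\sup_t\|\bu(t)\|^2 + \int_Q|\nabla\bu|^2$. On the right, the Lagrangian $L$ splits into the $\bef$-part, where $\sum_i u^i(f^i+mf^i_m)$ is handled by \eqref{2.21} and then by the coerciveness \eqref{2.3} to gain $C_0\int_Q(m^r+1)|\bv|^2$ with a favorable sign, and the $\nabla\bu\cdot(\bg+m\bg_m)$-part. For this latter term one uses \eqref{g-def} to write $\nabla u^i\cdot(\bg+m\bg_m)=\big(b_1(m)+m\partial_m b_1(m)\big)\sum_j (\nabla u^i A^j)v^j + \nabla u^i\cdot(\bb_0+m\partial_m\bb_0)$; the compatibility condition \eqref{1.81} lets us replace $b_1(m)\nabla u^i A^i$ by $-f^i_{v^i}$, and the same-range hypothesis \eqref{as-A2} converts every $\nabla u^i A^j$ into something bounded by $C_1|\nabla u^i A^i|=C_1|f^i_{v^i}|/|b_1(m)|$. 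This is precisely why \eqref{b1-as} is stated in that peculiar form: the resulting critical term $C_1\sqrt N\,\tfrac{|m\partial_m b_1|}{|b_1|}|\bv|\sum_i|f^i_{v^i}|$ is, by \eqref{b1-as}, dominated by $\sum_i(f^i+m\partial_m f^i)+K(1+m^{2s_0})$, which in turn (via \eqref{2.21} and \eqref{2.3}) is controlled by the good coercive term $C_0\int_Q(m^r+1)|\bv|^2$ already secured, leaving enough slack (the $\delta<1$, or the explicit $\gamma$-bound \eqref{as-gamma}) to close.

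The main obstacle, and the step I expect to be most delicate, is the final absorption/interpolation balancing: the term $C\int_Q(m+1)(m^r+1)|\bv|^2$ coming from the mean field testing must be dominated by the coercive term produced on the Bellman side, but the constants do not match for free — one has to interpolate the extra power of $(m+1)$ against $\sup_t\|m(t)\|_\sigma$ and $\int_Q(m+1)^{\sigma-2}|\nabla m|^2$ using the parabolic Gagliardo--Nirenberg inequality in $d$ space dimensions, and also control $\int_Q m^{2s_0+1}$ similarly. The admissibility of this interpolation, i.e. that the relevant exponent is strictly subcritical, is exactly the content of the technical condition \eqref{ffres}; verifying that \eqref{ffres} guarantees a small enough interpolation exponent (so the $m$-terms can be absorbed with a constant $<1$ after Young), and then assembling all pieces into \eqref{T-est} with the correct dependence on $\|\bu_T\|_\infty$ and $\|m_0\|_\sigma$, is the bookkeeping-heavy core of the argument.
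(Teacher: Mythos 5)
Your overall outline — maximum-principle $L^\infty$ bound via sub/super-solution structure, testing the mean field equation with $(m+1)^{\sigma-1}$, closing by parabolic interpolation via condition \eqref{ffres} — matches the paper's strategy, and your identification of the role of \eqref{b1-as} and \eqref{as-A2} is on target. But the central step of your plan has a genuine gap that would make the argument fail.

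You propose to obtain the coercive $\bv$-estimate by testing the $i$-th Bellman equation with $u^i$ and summing, claiming that $\sum_i u^i(f^i+mf^i_m)$ can be handled by \eqref{2.21} and \eqref{2.3} ``to gain $C_0\int_Q(m^r+1)|\bv|^2$ with a favorable sign.'' This cannot work: $u^i$ has no sign, so the pointwise coerciveness $\sum_i(f^i+mf^i_m)\ge C_0(m^r+1)|\bv|^2-K(m^{2s_0}+1)$ is destroyed once each term is multiplied by the sign-indefinite factor $u^i$. The same objection applies to your use of this testing to secure the $L^2$ bound on $\nabla\bu$. What the paper actually does is a \emph{duality pairing} between the forward and backward equations: test the mean field equation \eqref{1.2} with $u^i$ and the $i$-th Bellman equation \eqref{1.6} with $m$, then subtract. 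After integration by parts the $\partial_t$-terms, the $\nabla m\cdot\nabla u^i$-terms, and crucially the two transport terms $\mp\int_Q m\,\bg\cdot\nabla u^i$ all cancel, leaving
\begin{equation*}
\int_Q m\bigl(f^i+mf^i_m\bigr)\,\dx\dt
= -\Bigl[\int_\Omega m(T)u^i(T)-m(0)u^i(0)\,\dx\Bigr] - \int_Q m^2\,\bg_m\cdot\nabla u^i\,\dx\dt .
\end{equation*}
The left side is now weighted by the \emph{nonnegative} factor $m$, so summing over $i$ and invoking \eqref{2.3} does produce $C_0\int_Q m(m^r+1)|\bv|^2$ with the correct sign, while the boundary terms are controlled by $\|\bu\|_{L^\infty}$ and $\|m_0\|_1$. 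It is the residual term $\int_Q m^2\bg_m\cdot\nabla u^i$ (note the weight $m^2$, which you did not have) that is treated with \eqref{g-def}, \eqref{as-A2}, the constraint \eqref{1.81}, and then \eqref{b1-as} with $\delta<1$ for absorption — this is where the mechanism you describe is correctly used, but in a different place. Once this duality estimate is combined with the $(m+1)^{\sigma-1}$-testing and the interpolation, the $L^\infty$ bound on $\bu$ closes the bootstrap; finally, the $L^2$ bound on $\nabla\bu$ is obtained not by testing with $u^i$ but by multiplying the equation for $w=\sum_i u^i$ by the exponential weight $e^{-2Cw}$ (and the equation for $u^i-\varepsilon_0 w$ by $e^{2C(u^i-\varepsilon_0 w)}$), which is legitimate precisely because the $L^\infty$ bound makes these weights bounded and sign-definite.
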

Next, we state the second main theorem of the paper, which is the existence result.
\begin{Theorem}\label{T2}
Let $\bg$ and $\bef$ satisfy \eqref{2.2}--\eqref{b1-as} and \eqref{1.11}. Assume that \eqref{ffres}--\eqref{crcr} is fulfilled. Then for arbitrary $\bu_T\in L^{\infty}$ and nonnegative $m_0\in L^{\sigma}(\Omega)$ with $\sigma$ fulfilling \eqref{ddsig} there exists a weak solution satisfying the estimate \eqref{T-est} and fulfilling or almost all $t\in (0,T)$
\begin{equation}\label{wf-m}
\langle\partial_t m, \varphi\rangle  +\int_{\Omega} \nabla m \cdot \nabla \varphi - (m \bg( \bv, m)\cdot \nabla \varphi =0
\end{equation}
\begin{equation}\label{wf-u}
\begin{split}
-\langle \partial_t \bu, \bz \rangle  +\int_{\Omega} \nabla \bu \cdot \nabla \bz \dx &= \int_{\Omega} (\bef(\bv,m) + m\bef_m(\bv,m))\cdot \bz\dx \\
& \quad +\int_{\Omega} \nabla \bu \left[ \bg(\bv,m) + m\bg_{m} (\bv,m)\right]\cdot  \bz \dx
\end{split}
\end{equation}
and completed by the relationship between $\bv$ and $(m,\nabla \bu)$
\begin{equation}
\label{wf-v}
f^i_{v^i_{j}}(\bv,m)  + b_1(m) \sum_{k=1}^d \partial_{x_k} u^i A^i_{kj} =0  \qquad \textrm{ in } Q.
\end{equation}
\end{Theorem}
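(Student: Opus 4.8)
The plan is to obtain the existence of a weak solution by a two-level approximation and compactness argument, using the a~priori bounds of Theorem~\ref{T1} as the backbone. First I would regularize the problem on two fronts: (i) truncate the mean field variable, replacing $m$ by $m$ capped at level $\eta^{-1}$ in all nonlinearities so that $\bg$, $\bef$ and their $m$-derivatives become bounded Carath\'eodory maps; and (ii) if needed, add a small vanishing viscosity $\varepsilon\Delta$-type correction (or a $+\varepsilon|\nabla\bu|^2$-type cutoff) to control the quadratic gradient term in the $\bu$-equation. For this regularized system $\Pee$, the feedback law $\bv=\bw(m,\nabla\bu)$ is well defined thanks to \eqref{1.11}, \eqref{1.10} and the monotone operator theory already invoked in the excerpt, and the right-hand sides are subcritical, so existence of a weak solution $(m_{\varepsilon,\eta},\bu_{\varepsilon,\eta},\bv_{\varepsilon,\eta})$ follows from a standard Galerkin/Schauder fixed-point scheme for the coupled forward--backward pair \eqref{ff}: solve the linear parabolic $m$-equation for fixed $\bu$, solve the (now bounded-coefficient) backward $\bu$-equation for fixed $m$, and apply Schauder in a suitable $L^p(Q)$ ball, using parabolic compactness (Aubin--Lions) to gain the needed continuity of the fixed-point map.

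The second step is to pass $\varepsilon\to 0$ and then $\eta\to 0$. Here the crucial point is that the formal estimates of Theorem~\ref{T1} are in fact uniform in $\varepsilon,\eta$: the test-function manipulations that produce \eqref{T-est} — testing the $m$-equation by powers $(m+1)^{\sigma-1}$, testing the $\bu$-equation by $\bu$ and by $e^{\lambda\bu}$-type functions to exploit the one-sided bounds \eqref{2.21}, \eqref{2.3}, the coercivity \eqref{fv} and the smallness assumption \eqref{b1-as} — all go through on the truncated system with constants independent of the regularization, since the truncation only decreases $m$ and the one-sided structural inequalities are stable under it. This yields, uniformly: $\bu_{\varepsilon,\eta}$ bounded in $L^\infty(Q)\cap L^2(0,T;W^{1,2}_{per})$, $m_{\varepsilon,\eta}$ bounded in $L^\infty(0,T;L^\sigma)$ with $(m+1)^{(\sigma-2)/2}\nabla m$ bounded in $L^2(Q)$, and $(m+1)(m^r+1)|\bv_{\varepsilon,\eta}|^2$ bounded in $L^1(Q)$ together with $m^{2s_0+1}\in L^1(Q)$; condition \eqref{ffres} is exactly what makes these interpolate to genuine (super-linear) integrability of the right-hand side of the $\bu$-equation and of the flux $m\bg$, so that $\partial_t m$ and $\partial_t\bu$ are bounded in appropriate dual spaces and Aubin--Lions gives strong $L^q(Q)$ convergence of $m_{\varepsilon,\eta}$ and a.e. convergence (along a subsequence) of $\nabla\bu_{\varepsilon,\eta}$.

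The third step, and the main obstacle, is the identification of the nonlinear limits — in particular passing to the limit in the feedback relation \eqref{wf-v} and in the quadratic Hamiltonian $H(\nabla\bu,m)$. Weak convergence of $\nabla\bu_{\varepsilon,\eta}$ is not enough for the quadratic term, so I would prove strong convergence of $\nabla\bu_{\varepsilon,\eta}$ in $L^2(Q;\mathbb{R}^{Nd})$ by a standard "Boccardo--Murat / truncation" argument: test the difference of the $\bu$-equations (limit vs. approximation) by $T_k(\bu_{\varepsilon,\eta}-\bu)$ (truncation at height $k$), use the $L^\infty$ bound on $\bu$, the convexity \eqref{convex} and the monotonicity \eqref{1.11} to absorb the gradient terms, and conclude $\nabla\bu_{\varepsilon,\eta}\to\nabla\bu$ strongly, hence (via continuity of $\bw$ and the strong $m$-convergence) $\bv_{\varepsilon,\eta}\to\bv$ a.e. and in $L^2$ with the weight, which lets one pass to the limit in \eqref{wf-v}, in $\bef(\bv,m)+m\bef_m(\bv,m)$, and in $m\bg(\bv,m)$. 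A delicate sub-point is showing that no mass of the quadratic term concentrates where $m$ is large when $\eta\to0$ — i.e. that the truncation level $\{m_{\varepsilon,\eta}\ge\eta^{-1}\}$ has vanishing measure uniformly — which is handled by Chebyshev using the uniform $L^\infty(0,T;L^\sigma)$ and $L^{2s_0+1}(Q)$ bounds. Once the limit $(m,\bu,\bv)$ is produced, \eqref{wf-m}--\eqref{wf-u} follow by lower semicontinuity and the already-established strong convergences, and \eqref{T-est} is inherited from the uniform estimates by weak lower semicontinuity; nonnegativity of $m$ survives the limit since it holds for each approximation.
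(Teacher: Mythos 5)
Your first two steps (regularize to get bounded approximations, then carry the a~priori estimates of Theorem~\ref{T1} uniformly to the approximate solutions and extract weak limits via Aubin--Lions) match the paper's strategy, which indeed refers to an approximation scheme as in \cite{BeBuFr12}, records \eqref{T-est2}, \eqref{L1L}--\eqref{mt}, and obtains compactness plus a.e.\ convergence of $\nabla\bu^n$ through the $L^1$-/measure right-hand-side parabolic theory of \cite{BoGa89,BoDaGaOr97}. The divergence, and the genuine gap, is in your third step, the identification of the quadratic limit. You propose to upgrade to \emph{strong} $L^2(Q)$ convergence of $\nabla\bu_{\varepsilon,\eta}$ by testing the difference of the $\bu$-equations against $T_k(\bu_{\varepsilon,\eta}-\bu)$ and absorbing the gradient terms by appealing to \eqref{convex} and \eqref{1.11}. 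This cannot work as stated: \eqref{convex} is convexity of $f^i$ in the \emph{control} $v^i$, and \eqref{1.11} is strict monotonicity of the map $T$ in $\bv$; neither gives a monotonicity or sign structure for the Hamiltonian $H$ as a function of $\nabla\bu$, which is what a Boccardo--Murat absorption needs. For a genuinely coupled diagonal \emph{system} with critical quadratic gradient growth on both sides, strong $L^2$ convergence of gradients is precisely the thing that is hard and is not proved in the paper.

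What the paper does instead is the sub/super solution + Fatou device from \cite{BeBuFr12}, driven by the algebraic Lemma~\ref{L1}: multiply the equation for $w^n=\sum_i(u^n)^i$ by $e^{-2Cw^n}$, so that the quadratic term becomes the \emph{signed} quantity $2Ce^{-2Cw^n}|\nabla w^n|^2\ge 0$, and use the sum coercivity \eqref{Le1} to bound the integrand from below by a function converging strongly in $L^1$; then Fatou's lemma together with the already-known a.e.\ convergence \eqref{cn9}--\eqref{cn11} yields $-\partial_t w-\Delta w\ge\sum_iL^i(m,\bv,\nabla\bu)$ in the distributional sense, see \eqref{1.6Pwnei}--\eqref{oknn}. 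The reverse inequality is obtained in the same way for $z^n=(u^i)^n-\varepsilon_0 w^n$ with $e^{+2Cz^n}$, now using the upper bound \eqref{Le2}, see \eqref{1.6Ps}--\eqref{1.6Pok}. Summing and comparing the two one-sided inequalities forces equality first for $w$ in \eqref{1.6huhu} and then for each component in \eqref{1.6say2}, which is exactly \eqref{finish}. This argument never uses strong $L^2$ convergence of gradients; it trades it for the structural one-sided bounds of Lemma~\ref{L1}, and that is the essential novelty you need to reproduce. Without it, your Step 3 does not close.

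A secondary remark: your claim that the uniform estimates "all go through on the truncated system with constants independent of the regularization" is plausible but not automatic; the delicate points are that the one-sided assumptions \eqref{2.21}, \eqref{2.3}, \eqref{fv}, \eqref{b1-as} and the $m$-identity derived from the cross-testing \eqref{A1}--\eqref{Ales} must survive the truncation, and the paper simply takes this for granted by invoking \cite{BeBuFr12}. Flagging that you would have to verify it is fine, but asserting it is slightly optimistic.
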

To lustrate the power of the result, we just consider our prototypical example. First in case that $s_0=0$, we see that the only restriction is
$$
2Ns<r.
$$
In the opposite extreme case, i.e., if $r=s=0$, then
$$
s_0 <\frac{1}{2(d+2)}.
$$

\section{Algebraic estimates for Lagrangians and Hamiltonians}\label{Sec3}
In this section, we derive basic algebraic inequalities that are satisfied for Lagrangians and consequently also for Hamiltonians provided that $\bef$ and $\bg$ satisfy the assumption introduced in Section~\ref{SA}, namely the assumptions \eqref{2.2}--\eqref{b1-as}. The key observation is that under these assumptions, the Lagrangians  satisfy the the lower sum corecivness and the proper upper estimates. It will be also evident from the estimates below why we require $2s\le r$ in main results of the paper.
\begin{Lemma}\label{L1}
Let $\bef$ and $\bg$ satisfy \eqref{2.2}--\eqref{b1-as}. Then there exists a constant $C>0$ and an $\varepsilon_0\in (0,1/(2N))$ such that for almost all $(t,x)$, all $(m,\bv,\nabla \bu)$ fulfilling \eqref{1.8} and all all $i=1,\ldots, N$ there hold:
\begin{itemize}
\item[i)] sum coerciveness
\begin{equation}\label{Le1}
\begin{split}
\sum_{i=1}^N L^i(m,\bv,\nabla \bu) &\ge\frac{C_0}{2}(m^r+1)|\bv|^2 - C(m^{2s_0}+1)\\
&-C\left|\nabla \sum_{i=1}^N  u^i \right|^2 \left(1+\frac{m^{2s}+1}{m^r+1}\right);
\end{split}
\end{equation}
\item[ii)] upper bound
\begin{equation}\label{Le2}
\begin{split}
&L^i(m,\bv,\nabla \bu) - \varepsilon_0\sum_{j=1}^N L^j(m,\bv,\nabla \bu) \\
&\quad \le C(1+m^{2s_0}) + C \left|\nabla\left(u^i- \varepsilon_0 \sum_{i=j}^N u^j\right)\right|^2\left(1+\frac{m^{2s}+1}{m^r+1}\right);
\end{split}
\end{equation}
\item[iii)] global bound
\begin{equation}\label{Le3}
\begin{split}
&|L^i(m,\bv,\nabla \bu)|\\
&\quad \le C \left(1+m^{2s_0}+(1+m^r)|\bv|^2 + |\nabla \bu|^2\left(1+\frac{m^{2s}+1}{m^r+1}\right) \right).
\end{split}
\end{equation}
\end{itemize}
\end{Lemma}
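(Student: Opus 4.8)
The plan is to work directly from the definition $L^i(m,\bv,\nabla\bu)=f^i(\bv,m)+mf^i_m(\bv,m)+\nabla u^i\cdot[\bg(\bv,m)+m\bg_m(\bv,m)]$, treating the "$\bef$-part" and the "$\bg$-part" separately and then combining. For the $\bef$-part of the sum in (i), I would simply invoke the one-sided sum coerciveness \eqref{2.3}, which gives $\sum_i(f^i+mf^i_m)\ge C_0(m^r+1)|\bv|^2-K(m^{2s_0}+1)$. The real work is to absorb the $\bg$-part $\sum_i\nabla u^i\cdot[\bg+m\bg_m]$. Using the structural form \eqref{g-def} of $\bg$ and the growth bounds \eqref{2.6}, \eqref{as-g1}, \eqref{as-A1}, \eqref{as-g2}, one estimates $|\bg+m\bg_m|\le C((m^s+1)|\bv|+m^{s_0}+1)$, so that $|\nabla u^i\cdot(\bg+m\bg_m)|\le C|\nabla u^i|\,((m^s+1)|\bv|+m^{s_0}+1)$. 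Here the crucial point is that only $\nabla\sum_j u^j$ may appear in the final bound, not each $\nabla u^i$ separately; this is exactly what the compatibility relation \eqref{1.81} buys us. From \eqref{1.81} we get $b_1(m)\nabla u^i A^i=-f^i_{v^i}$, and by the common-range assumption \eqref{as-A2} together with the coercivity-type bound on $f^i_{v^i}$ implicit in \eqref{fv}/\eqref{2.2}, each individual term $|\nabla u^i|\,(m^s+1)|\bv|$ can be rewritten, up to the factor $|m\partial_m b_1|/|b_1|$, in terms of $|\bv|\sum_i|f^i_{v^i}|$ — and this is precisely the left-hand side of the smallness assumption \eqref{b1-as}. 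Applying \eqref{b1-as} bounds that sum by $\sum_i(f^i+m\partial_m f^i)+K(1+m^{2s_0})$, which is again controlled by the coercive term, so a fraction of the $C_0(m^r+1)|\bv|^2$ is consumed but a positive fraction $\tfrac{C_0}{2}$ survives. The leftover pieces — those involving $m^{s_0}+1$ and the "$+1$" parts of $b_1$, where \eqref{b1-as} does not directly apply — are handled by Young's inequality, producing the terms $C(m^{2s_0}+1)$ and $C|\nabla\sum u^j|^2(1+\tfrac{m^{2s}+1}{m^r+1})$ in \eqref{Le1}. This is where the hypothesis $r\ge 2s$ enters: it keeps the weight $\tfrac{m^{2s}+1}{m^r+1}$ bounded, so the gradient term does not blow up as $m\to\infty$.

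For part (ii), the idea is the same linear-combination trick, now applied to $u^i-\varepsilon_0\sum_j u^j$ rather than to $\sum u^j$. I would first use the pointwise one-sided bound \eqref{2.21} on $f^i+mf^i_m$ to control the $\bef$-part of $L^i$ by $K(1+f^i+|\bv|^{\alpha+1}+m^{2s_0})$; the term $f^i$ is then reabsorbed by subtracting a small multiple of the full coercive sum $\varepsilon_0\sum_j L^j$ (bounded below by (i)), while $|\bv|^{\alpha+1}$ with $\alpha<1$ is subcritical and absorbed into $\varepsilon_0(m^r+1)|\bv|^2+C$ by Young. For the $\bg$-parts, $\nabla u^i\cdot[\bg+m\bg_m]-\varepsilon_0\sum_j\nabla u^j\cdot[\bg+m\bg_m]$ collects into $\nabla(u^i-\varepsilon_0\sum_j u^j)\cdot[\bg+m\bg_m]$, and one repeats the Young-inequality split as in (i), choosing $\varepsilon_0\in(0,1/(2N))$ small enough that all reabsorptions are consistent. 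The common-range condition \eqref{as-A2} is again needed so that the compatibility relation can be used uniformly in $i$.

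Part (iii) is the easiest: it is just a crude triangle-inequality bound. Estimate $|f^i+mf^i_m|\le K(m^r+1)|\bv|^2+Km^{2s_0}$ directly from \eqref{2.2}, and $|\nabla u^i\cdot[\bg+m\bg_m]|\le C|\nabla u^i|((m^s+1)|\bv|+m^{s_0}+1)$, then apply Young's inequality with weight $m^r+1$ on the cross term $|\nabla u^i|(m^s+1)|\bv|\le \tfrac12(m^r+1)|\bv|^2+\tfrac{C(m^{2s}+1)}{m^r+1}|\nabla u^i|^2$ and absorb the $m^{s_0}$ pieces into $1+m^{2s_0}+|\nabla u^i|^2$. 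No structural assumption beyond the growth bounds is needed here.

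I expect the main obstacle to be the bookkeeping in steps (i) and (ii) that turns the individual gradients $\nabla u^i$ into the single combination $\nabla\sum u^j$ (resp. $\nabla(u^i-\varepsilon_0\sum u^j)$): one must carefully track how the compatibility identity \eqref{1.81}, the common-range bound \eqref{as-A2}, and the smallness assumption \eqref{b1-as} interact, and verify that the constant in \eqref{b1-as} — or the simpler sufficient condition \eqref{as-gamma} with $\gamma\le C_0/(2(C_1^2+N^2))$ — is exactly what is needed to leave the coercive constant $C_0/2$ strictly positive after all absorptions. Getting the chain of Young's-inequality parameters and the choice of $\varepsilon_0$ mutually consistent is the delicate part; everything else is routine growth estimation using \eqref{2.2}, \eqref{2.6}, \eqref{as-g1}–\eqref{as-g2}, and $r\ge 2s$.
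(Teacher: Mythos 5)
Your high-level plan (split $L^i$ into the $\bef$-part and the $\bg$-part, use \eqref{2.3} for the former, absorb the latter by Young) is the right starting point, but there are two genuine misdiagnoses that would make your argument either fail or be needlessly complicated.

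First, for part (i) you invoke the compatibility relation \eqref{1.81} and the smallness assumption \eqref{b1-as} to explain why only $\nabla\sum_j u^j$ appears. Neither is needed there, and in fact \eqref{b1-as} is not used anywhere in the proof of Lemma~\ref{L1} — it only enters later, in the a~priori estimate for $m$ (when estimating the term $I_2$ in the derivation of \eqref{3.35}). The reason the sum collapses is purely algebraic: $\bg$ and $\bg_m$ do not depend on the player index $i$, so
\[
\sum_{i=1}^N\nabla u^i\cdot(\bg+m\bg_m)=\Bigl(\nabla\sum_{i=1}^N u^i\Bigr)\cdot(\bg+m\bg_m),
\]
and then \eqref{2.6} and Young finish the job. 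Your mistake is taking absolute values on each summand $|\nabla u^i\cdot(\bg+m\bg_m)|$ before summing, which destroys this cancellation and forces you to reach for structural tools that play no role here.

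Second, for part (ii) the step you describe — bounding the $\bef$-part of $L^i$ by $K(1+f^i+|\bv|^{\alpha+1}+m^{2s_0})$ via \eqref{2.21} and then "reabsorbing $f^i$ by subtracting a small multiple of the coercive sum" — does not close. The only sum coerciveness available, \eqref{2.3}, bounds $\sum_j(f^j+mf^j_m)$ from \emph{below}; it gives no \emph{upper} bound on an individual $f^i$, and the crude growth bound $|f^i|\le K(m^r+1)|\bv|^2+Km^{2s_0}$ cannot be absorbed by $-\varepsilon C_0(m^r+1)|\bv|^2$ for small $\varepsilon$. The paper's actual argument at this point is the crux of the lemma and is missing from your sketch: use convexity \eqref{convex} to write $f^i(m,\bv)\le f^i|_{v^i=0}+f^i_{v^i}\cdot v^i$, use \eqref{at0} for $f^i|_{v^i=0}$, then use the constraint \eqref{1.81} to replace $f^i_{v^i}$ by $-b_1(m)\nabla u^i A^i$, split $\nabla u^i=\nabla(u^i-\varepsilon\sum_\ell u^\ell)+\varepsilon\nabla\sum_\ell u^\ell$, and invoke \eqref{as-A2}, \eqref{fv} and \eqref{2.2} together with a chain of Young inequalities (with carefully coupled exponents of $\varepsilon$) to absorb the resulting $|v^i|$ and $|\bv|$ terms into $-\varepsilon C_0(m^r+1)|\bv|^2$. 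That is where \eqref{1.81}, \eqref{as-A2}, \eqref{fv} and convexity actually enter — not in part (i), and not via \eqref{b1-as}.

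Your treatment of part (iii) by a direct triangle inequality plus Young is fine and is an acceptable alternative to the paper's shortcut (deducing (iii) from (i) and (ii)); and your remark that $\alpha<1$ makes $|\bv|^{\alpha+1}$ subcritical and absorbable is correct. But the misattribution of \eqref{b1-as} to this lemma and the incorrect absorption of $f^i$ in (ii) are real gaps that need to be fixed along the lines above.
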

\begin{proof}
We start with the proof of \eqref{Le1}. Using the definition of $L$ in \eqref{1.6} we get the identity
\begin{align*}
\sum_{i=1}^N L^i(m,\bv,\nabla \bu) &= \sum_{i=1}^N \left(f^i(\bv,m) + mf^i_m(\bv,m)\right)\\
&\quad+\sum_{i=1}^N \nabla u^i \cdot ( \bg(\bv,m) + m\bg_{m} (\bv,m)).
\end{align*}
Hence, using \eqref{2.3} for the first part and \eqref{2.6} for the second part, we observe that
\begin{align*}
\sum_{i=1}^N L^i(m,\bv,\nabla \bu) &\ge  C_0(m^r+1)|\bv|^2 - K(m^{2s_0}+1)\\
&-K\left|\nabla \sum_{i=1}^N  u^i \right|   ((m^s+1)|\bv|+m^{s_0}+1)\\
&\ge  \frac{C_0}{2}(m^r+1)|\bv|^2 - C(m^{2s_0}+1)\\
&-C\left|\nabla \sum_{i=1}^N  u^i \right|^2 \left(1+ \frac{m^{2s}+1}{m^r+1}\right),
\end{align*}
where for the second estimate we used the Young inequality. This finishes the proof of \eqref{Le1}.

Next, we look for \eqref{Le2}. Using the definition of Lagrangians, we directly obtain  for arbitrary $\varepsilon>0$ the following identity
$$
\begin{aligned}
I&:=L^i(m,\bv,\nabla \bu) - \varepsilon \left(\sum_{i=1}^N L^i(m,\bv,\nabla \bu)\right)\\
&=f^i(m,\bv) + mf^i_m(m,\bv) - \varepsilon\left(\sum_{j=1}^N f^j(m,\bv) + mf^j_m(m,\bv)\right)\\
&+ \nabla\left(u^i- \varepsilon \sum_{j=1}^N u^j\right) \cdot\left( \bg(m,\bv) + m\bg_{m} (m,\bv)\right).
\end{aligned}
$$
Next, using \eqref{2.21}, \eqref{2.3}, \eqref{2.6} and the Young inequality, we have
\begin{equation}\label{P1}
\begin{aligned}
I&\le C(1+m^{2s_0} +|\bv|^{\alpha+1}) + Kf^i(m,\bv)- \varepsilon C_0(m^r+1)|\bv|^2 \\
&\quad + K\left|\nabla\left(u^i- \varepsilon \sum_{i=1}^N u^i\right)\right|\left(1+m^{s_0}+(m^s+1)|\bv|\right)\\
&\le C(\varepsilon)(1+m^{2s_0} ) + Kf^i(m,\bv)- \frac{\varepsilon C_0}{2}(m^r+1)|\bv|^2 \\
&\quad + C(\varepsilon) \left(1+\frac{m^{2s}+1}{m^r+1}\right)\left|\nabla\left(u^i- \varepsilon \sum_{i=1}^N u^i\right)\right|^2.
\end{aligned}
\end{equation}
It remains to estimate the term with $f^i$. We use the convexity of $f^i$ with respect to $v^i$ and the assumption \eqref{at0}. Then to evaluate $f^i_{v^i}$ we also use the constraint \eqref{1.8}, which however in our case reduces to \eqref{1.81} due to the structural assumptions \eqref{g-def}. Doing so, we get
\begin{equation*}
\begin{aligned}
&Kf^i(m,\bv)\le Kf^i(m,\bv)|_{v^i=0} + Kf^i_{v^i}(m,\bv) \cdot v^i \\
&\le C(1+m^{2s_0}+|\bv|^{\alpha +1}) -Cb_1(m)\sum_{k=1}^d\sum_{j=1}^N \partial_{x_k}u^i A^{i}_{kj}v^i_j \\
&= C(1+m^{2s_0}+|\bv|^{\alpha +1}) -Cb_1(m)\sum_{k=1}^d\sum_{j=1}^N \partial_{x_k}\left(u^i-\varepsilon\sum_{\ell=1}^N u^{\ell}\right) A^{i}_{kj}v^i_j \\
&\quad -\varepsilon Cb_1(m)\sum_{k=1}^d\sum_{j=1}^N \sum_{\ell=1}^N \partial_{x_k}u^{\ell} A^{i}_{kj}v^i_j.
\end{aligned}
\end{equation*}
Then we estimate terms involving $A^{\ell} \nabla u^i$ with the help of \eqref{as-g1}, \eqref{as-A2} and \eqref{1.81}  as follows
\begin{equation}
\begin{aligned}
&Kf^i(m,\bv)\\
&\le C(1+m^{2s_0}+|\bv|^{\alpha +1}) +C(m^s+1)\left|\nabla (u^i-\varepsilon\sum_{\ell=1}^N u^{\ell})\right||\bv| \\
&\quad +\varepsilon C|v^i|\sum_{\ell=1}^N |\nabla u^{\ell} A^{\ell}||b_1(m)|\\
&\le C(1+m^{2s_0}+|\bv|^{\alpha +1}) +C(m^s+1)\left|\nabla (u^i-\varepsilon\sum_{\ell=1}^N u^{\ell})\right||\bv| \\
&\quad +\varepsilon C|v^i||\bef_{\bv}(m,\bv)|.
\end{aligned}\label{P3}
\end{equation}
Finally, we focus on estimate of $v^i$. It follows from \eqref{fv}, \eqref{as-A2}, \eqref{as-g1} and \eqref{1.81} that
\begin{equation*}
\begin{split}
C_0(m^r+1)|v^i|^2&\le f^i_{v^i}\cdot v^i + K(1+m^{2s_0}+|\bv|^{1+\alpha})\\
&=-b_1(m)\nabla u^i A^i \cdot v^i + K(1+m^{2s_0}+|\bv|^{1+\alpha})\\
&=-b_1(m)\nabla \left(u^i-\varepsilon \sum_{\ell=1}^N u^{\ell}\right) A^i \cdot v^i\\
 &\quad -\varepsilon b_1(m)\sum_{\ell=1}^N \nabla u^{\ell} A^i \cdot v^i+ K(1+m^{2s_0}+|\bv|^{1+\alpha})\\
&\le C(m^s+1)\left|\nabla \left(u^i-\varepsilon \sum_{\ell=1}^N u^{\ell}\right)\right||v^i|\\
 &\quad +\varepsilon C|v^i| |b_1(m)|\sum_{\ell=1}^N |\nabla u^{\ell} A^{\ell}|+ K(1+m^{2s_0}+|\bv|^{1+\alpha})\\
 &\le C\frac{m^{2s}+1}{m^r+1}\left|\nabla \left(u^i-\varepsilon \sum_{\ell=1}^N u^{\ell}\right)\right|^2+ C_0\frac{m^r+1}{2}|v^i|^2\\
 &\quad +\varepsilon C|\bv||\bef_{\bv}(m,\bv)|+ K(1+m^{2s_0}+|\bv|^{1+\alpha}),
\end{split}
\end{equation*}
where for the last inequality, we used the Young inequality and the structural constraint \eqref{1.81}.
Hence absorbing the corresponding term to the left hand side, using \eqref{2.2} and the Young inequality, we get
\begin{equation}\label{P4}
\begin{split}
(m^r+1)|v^i|^2&\le C\frac{(m^{2s}+1)}{m^r+1}\left|\nabla \left(u^i-\varepsilon \sum_{\ell=1}^N u^{\ell}\right)\right|^2\\
 &+\varepsilon C(m^r+1)|\bv|^2 + C(1+m^{2s_0}+|\bv|^{1+\alpha}).
\end{split}
\end{equation}
Next, using the Young inequality in \eqref{P3}, we find
\begin{equation}
\begin{aligned}
&Kf^i(m,\bv)\\
&\le C(\varepsilon)(1+m^{2s_0}) +C\frac{m^{2s}+1}{\varepsilon^2 (m^r+1)}\left|\nabla (u^i-\varepsilon\sum_{\ell=1}^N u^{\ell})\right|^2 \\
&\quad +C\varepsilon^2(m^r+1) |\bv|^2 +\varepsilon C|v^i||\bef_{\bv}(m,\bv)|\\
&\le C(\varepsilon)(1+m^{2s_0}) +C\frac{m^{2s}+1}{\varepsilon^2 (m^r+1)}\left|\nabla \left(u^i-\varepsilon\sum_{\ell=1}^N u^{\ell}\right)\right|^2 \\
&\quad +C\varepsilon^2(m^r+1) |\bv|^2 +\varepsilon^{\frac12} (m^r+1)|v^i|^2 +C \varepsilon^{\frac32}\frac{|\bef_{\bv}(m,\bv)|^2}{m^r+1}.
\end{aligned}\label{P5}
\end{equation}
Finally, we substitute  \eqref{P4} into \eqref{P5} to estimate term with $v^i$ and with the help of the assumption \eqref{2.2} we obtain
\begin{equation}
\begin{aligned}
&Kf^i(m,\bv)\\
&\le C(\varepsilon)\left(1+m^{2s_0}+\frac{m^{2s}+1}{m^r+1}\left|\nabla \left(u^i-\varepsilon\sum_{\ell=1}^N u^{\ell}\right)\right|^2\right) \\
&\quad +C\varepsilon^{\frac32}(m^r+1) |\bv|^2.
\end{aligned}\label{P6}
\end{equation}
Consequently, \eqref{P6} combined with \eqref{P1} directly implies
\begin{equation}\label{P7}
\begin{aligned}
I&\le C(\varepsilon)(1+m^{2s_0})- \frac{C_0}{2}(\varepsilon-C\varepsilon^{\frac32})(m^r+1)|\bv|^2 \\
&\quad + C(\varepsilon) \frac{m^{2s}+1}{m^r+1}\left|\nabla\left(u^i- \varepsilon \sum_{i=1}^N u^i\right)\right|^2.
\end{aligned}
\end{equation}
Thus, if we choose $\varepsilon=:\varepsilon_0 \in (0,2/N)$ such that
$$
\varepsilon_0-C\varepsilon_0^{\frac32}\ge 0,
$$
which is always possible and the maximal value of such $\varepsilon_0$ depends only od the generic constant $C$, we obtain the desired estimate \eqref{Le2}.

The estimate \eqref{Le3} is then a simple combination of \eqref{Le1} and \eqref{Le2}. The proof is finished.
\end{proof}

\section{Uniform a~priori estimates - Proof of Theorem~\ref{T1}}
This section is devoted to the estimates for solution $(m,\bu, \bv)$ of \eqref{1.2}, \eqref{1.6} and \eqref{1.8} that depend only on data of the problem provided that the assumptions \eqref{2.2}--\eqref{b1-as} are satisfied. Here, we proceed rather formally, considering that the solution is sufficiently regular. The justification of such a procedure then will be provided in the proof of the existence result, i.e., in the proof of Theorem~\ref{T2}.

\subsection{Estimates for $m$}
We start with estimates for $m$. First, if $m_0\ge 0$ almost everywhere in $\Omega$, then the standard minimum principle for parabolic equations implies that (sufficiently smooth) solution satisfies $m\ge 0$ almost everywhere in $Q$ as well. Next, setting $\varphi:=1$ in \eqref{wf-m}, we get with the help of nonnegativity of $m$ that
$$
\frac{d}{\dt} \|m(t)\|_1 = 0.
$$
Consequently, we have
\begin{equation}\label{3.1}
\sup_{t\in (0,T)}\|m(t)\|_1 \le \|m_0\|_1 \le C.
\end{equation}

Next, in order to obtain estimates on $(\bu, \bv)$ we need to improve the information about $m$ since the Lagrangians (or Hamiltonians) depend heavily on $m$. Our goal is to prove the starting point inequality
\begin{equation}\label{3.35}
\begin{split}
E:=&\sup_{t\in (0,T)}\|m(t)+1\|_{\sigma}^{\sigma} + \int_Q (m+1)^{\sigma -2}|\nabla m|^2\dx \dt \\
&\le C\int_{Q}(m+1)^{2s_0+1}\dx \dt \\
&\quad + C\|\bu\|_{L^{\infty}(Q)}^2 \left(1+ \int_{Q} (m+1)^{2s_0-\sigma +2}\dx \dt\right),
\end{split}
\end{equation}
where the constant $C$ depends only on data given by assumptions on $\bef$ and $\bg$.

\begin{proof}[Proof of \eqref{3.35}]
We first set $\varphi:=(m+1)^{\sigma-1}$ in \eqref{wf-m}, where $\sigma$ is given by \eqref{ddsig}, i.e.,
\begin{equation*}
\sigma := r+1-2s\ge 1.
\end{equation*}
With such a choice of $\varphi$, we get the identity
$$
\begin{aligned}
\frac{1}{\sigma}& \frac{d}{\dt} \|m(t)+1\|_{\sigma}^{\sigma} + (\sigma-1)\int_{\Omega} (m+1)^{\sigma-2}|\nabla m|^2 \dx \\
&= (\sigma-1) \int_{\Omega}m(m+1)^{\sigma-2}\bg(m,\bv) \cdot \nabla m\dx \\
&=(\sigma-1) \int_{\Omega}m (m+1)^{\sigma-2}b_1(m)\sum_{j=1}^M A^j v^j \cdot \nabla m \dx \\
&\quad  +(\sigma-1)\int_{\Omega} m(m+1)^{\sigma -2}\bb_0(m)\cdot \nabla m \dx,
\end{aligned}
$$
where the second equality follows from the structural assumption on $\bg$, see \eqref{g-def}. Clearly, the second term on the right hand side vanishes due to the integration by parts and spatially periodic boundary conditions. For the first integral we use the Young inequality to conclude
$$
\begin{aligned}
&\frac{1}{\sigma} \frac{d}{\dt} \|m(t)+1\|_{\sigma}^{\sigma} + \frac{(\sigma-1)}{2}\int_{\Omega} (m+1)^{\sigma-2}|\nabla m|^2 \dx \\
&\qquad \le \frac{(\sigma-1)}{2} \int_{\Omega} m^2(m+1)^{\sigma -2} b^2_1(m)\big|\sum_{j=1}^N A^j v^j\big|^2\dx.
\end{aligned}
$$
Thus, using the bounds for $b_1$ and $A^j$, namely \eqref{as-g1} and \eqref{as-A1}, we obtain after integration over $(0,T)$
\begin{equation}\label{3.3}
\begin{split}
\sup_{t\in (0,T)}&\|m(t)+1\|_{\sigma}^{\sigma} + \int_Q (m+1)^{\sigma -2}|\nabla m|^2\dx \dt \\
&\le C(\sigma, \|m_0\|_q,\Omega)\left( 1 + \int_Q m (m+1)^{2s+\sigma-1}|\bv|^2 \dx \dt \right).
\end{split}
\end{equation}

Next,  we set $\varphi:=u^i$ in \eqref{wf-m} and $z:=m$ in the $i$-th equation in \eqref{wf-u}, integrate the result with respect to time and use integration by parts to obtain the following two identities
\begin{equation}\label{A1}
\begin{split}
\int_{\Omega} m(T)u^i(T) - m(0)u^i(0) \dx - \int_Q  m \partial_t u^i \dx \dt   \\
+\int_{Q} \nabla m \cdot \nabla u^i - m \bg(\bv, m) \cdot \nabla u^i \dx \dt  =0
\end{split}
\end{equation}
and
\begin{equation}\label{A2}
\begin{split}
-\int_Q &\partial_t u^i m \dx \dt  +\int_{Q} \nabla u^i \cdot \nabla m \dx dt  \\
&= \int_{Q} m(f^i(\bv,m) + mf^i_m(\bv,m))\dx \dt  \\
&\quad +\int_{Q} m \nabla u^i \cdot \bg(\bv,m) + m^2\bg_{m} (\bv,m)\cdot \nabla u^i \dx \dt.
\end{split}
\end{equation}
Subtracting \eqref{A1} from \eqref{A2} we arrive at identity
\begin{equation}\label{A3}
\begin{split}
\int_{Q} &m(f^i(\bv,m) + mf^i_m(\bv, m)) \dx \dt \\
 &= -\int_{\Omega} m(T)u^i(T) - m(0)u^i(0)\dx  -\int_{Q}  m^2\bg_{m} (\bv,m)\cdot \nabla u^i\\
&\le C\|\bu\|_{L^{\infty}(Q)} -\int_{Q}  m^2\bg_{m} (\bv,m)\cdot \nabla u^i\dx \dt,
\end{split}
\end{equation}
where for the last inequality we used the H\"{o}lder inequality and the uniform bound \eqref{3.1}.
Next, we evaluate the last integral. Using the structural assumption \eqref{g-def} and the growth assumptions \eqref{as-g1}--\eqref{as-g2}, we see that
$$
\begin{aligned}
&-\int_{Q}  m^2\bg_{m} (\bv,m)\cdot \nabla u^i\dx \dt \\
&= -\int_{Q} m^2 \partial_m \bb_0(m) \cdot \nabla u^i \dx \dt  - \sum_{j=1}^N\int_{Q} m^2\partial_m b_1(m)  A^jv^j\cdot\nabla u^i\dx \dt\\
&= \int_{Q}u^i \partial_m( m^2 \partial_m \bb_0(m))\cdot \nabla m \dx \dt \\
&\qquad - \sum_{j=1}^N\int_{Q} m^2\partial_m b_1(m)  A^jv^j\cdot\nabla u^i\dx \dt\\
&\le \int_{Q}K|\bu| m(m+1)^{s_0-1} |\nabla m| +  m^2|\partial_mb_1(m)| |\bv| \left(\sum_{j=1}^N|\nabla u^i A^j|^2\right)^{\frac12}\dx \dt\\
&=:I_1+I_2.
\end{aligned}
$$
To estimate $I_2$ we use \eqref{as-A2} and  the constraint \eqref{1.81} to get
$$
\begin{aligned}
I_2 &\le C_1 \int_{Q}  \frac{m^2 |\partial_m b_1(m)|}{b_1(m)} |\bv|  \left( \sum_{j=1}^N |b_1(m)   \nabla u^i A^i|^2 \right)^{\frac12}\dx \dt \\
&=C_1\sqrt{N}\int_{Q}  \frac{m^2 |\partial_m b_1(m)|}{b_1(m)} \left|f^i_{v^i} \right| |\bv|  \dx \dt.
\end{aligned}
$$
For the term $I_1$ we use the Young and the H\"{o}lder inequalities to obtain for arbitrary $\varepsilon>0$
$$
I_1 \le \varepsilon \int_{Q}(m+1)^{\sigma -2}|\nabla m|^2 \dx \dt + C(\varepsilon)\|\bu\|_{L^{\infty}(Q)}^2 \int_{Q} (m+1)^{2s_0-\sigma +2}\dx \dt.
$$
Finally, substituting these estimates into \eqref{A3}, summing the result over $i=1,\ldots, N$ and using \eqref{b1-as}, we obtain
\begin{equation}\label{Ales}
\begin{split}
\int_{Q} &\sum_{i=1}^N m(f^i(\bv,m) + mf^i_m(\bv, m)) \dx \dt \\
 &\le N\varepsilon \int_{Q}(m+1)^{\sigma -2}|\nabla m|^2 \dx \dt \\
 &+ C(\varepsilon)\|\bu\|_{L^{\infty}(Q)}^2 (1+\int_{Q} (m+1)^{2s_0-\sigma +2}\dx \dt)\\
 &+  C_1\sqrt{N}\int_{Q}  \frac{m^2 |\partial_m b_1(m)|}{b_1(m)} |\bv|\sum_{i=1}^N\left|f^i_{v^i} \right|   \dx \dt\\
 &\le N\varepsilon \int_{Q}(m+1)^{\sigma -2}|\nabla m|^2 \dx \dt \\
 &+ C(\varepsilon)\|\bu\|_{L^{\infty}(Q)}^2 (1+\int_{Q} (m+1)^{2s_0-\sigma +2}\dx \dt)\\
 &+ \int_{Q} \delta\sum_{i=1}^N m(f^i(\bv,m) + mf^i_m(\bv, m)) +K(1+m^{2s_0}) \dx \dt.
\end{split}
\end{equation}
Consequently, since $\delta<1$ we can absorb the last term by the left hand side. Therefore, we can use the sum coerciveness, i.e., the assumption \eqref{2.3}, to deduce that
\begin{equation}\label{A4}
\begin{split}
\int_{Q} &m(m^r+1)|\bv|^2 \dx \dt \\
&\le C\|\bu\|_{L^{\infty}(Q)} +  C(\varepsilon)\|\bu\|_{L^{\infty}(Q)}^2 \int_{Q} (m+1)^{2s_0-\sigma +2}\dx \dt \\
&+ C\int_{Q}(m+1)^{2s_0+1}\dx \dt +C\varepsilon \int_{Q}(m+1)^{\sigma -2}|\nabla m|^2 \dx \dt.
\end{split}
\end{equation}
Finally, using the estimate \eqref{A4} in \eqref{3.3},  we see that (recalling \eqref{ddsig})
\begin{equation*}
\begin{split}
\sup_{t\in (0,T)}&\|m(t)+1\|_{\sigma}^{\sigma} + \int_Q (m+1)^{\sigma -2}|\nabla m|^2\dx \dt \\
&\le C\left( 1 + \int_Q m (m^r+1)|\bv|^2 \dx \dt \right)\\
&\le C\int_{Q}(m+1)^{2s_0+1} +\varepsilon (m+1)^{\sigma -2}|\nabla m|^2 \dx \dt \\
&+ C(\varepsilon)\|\bu\|_{L^{\infty}(Q)}^2 \left(1+ \int_{Q} (m+1)^{2s_0-\sigma +2}\dx \dt\right).
\end{split}
\end{equation*}
Thus, setting $\varepsilon>0$ sufficiently small, we obtain the desired inequality~\eqref{3.35}.
\end{proof}

Our next goal is to improve the information coming from \eqref{3.35} such that the estimate on $E$ depends only o $\bu$ and not on $m$. It means that we want to show that
\begin{equation}\label{3.F2}
\begin{split}
E\le C\left(1+\|\bu\|_{L^{\infty}(Q)}^{\frac{2(\sigma(d+2)-d)}{\sigma(d+2)-d-((2s_0-\sigma +1)_+(d+2))}}\right),
\end{split}
\end{equation}
provided that
\begin{align}
2s_0 + 2s &<r+ \frac{2}{d+2}.\label{restr1}\\
 s_0+2s  &< r+\frac{1}{d+2}.\label{restr2}
\end{align}
The estimate \eqref{3.F2} will be shown by a certain interpolation of \eqref{3.1} and \eqref{3.35}.
\begin{proof}[Proof of \eqref{3.F2}]
First, we  recall the following parabolic interpolation inequality
\begin{equation}
\begin{split}
\int_0^T\|u\|_{\frac{2(d+2)}{d}}^{\frac{2(d+2)}{d}} &\le \int_0^T \|u \|^{\frac{4}{d}}_2 \|u\|_{1,2}^2\dt \\
&\le C\left(\sup_{t\in(0,T)}\|u(t) \|_2^2 + \int_0^T \|\nabla u \|_2^2\dt\right)^{\frac{d+2}{d}}.
\end{split}\label{inter}
\end{equation}
Next, we apply \eqref{inter} onto the function $u:=(m+1)^{\frac{\sigma}{2}}$ with $\sigma$ given by \eqref{ddsig} to get
\begin{equation}\label{sigma-est}
\begin{split}
&\int_{Q}(m+1)^{\frac{\sigma(d+2)}{d}}\dx \dt = \int_0^T \|(m+1)^{\frac{\sigma}{2}}\|_{\frac{2(d+2)}{d}}^{\frac{2(d+2)}{d}}\dt\\
&\le C\left(\sup_{t\in(0,T)}\|(m(t)+1)^{\frac{\sigma}{2}} \|_2^2 + \int_0^T \|\nabla (m+1)^{\frac{\sigma}{2}} \|_2^2\dt\right)^{\frac{d+2}{d}}\\
&\le C\left(\sup_{t\in(0,T)}\|(m(t)+1) \|_{\sigma}^{\sigma} + \int_Q (m+1)^{\sigma-2}|\nabla m|^2\dx \dt \right)^{\frac{d+2}{d}}\\
&= C E^{\frac{d+2}{d}}.
\end{split}
\end{equation}
Finally, we also use the a~priori estimate \eqref{3.1} and the interpolation inequality
$$
\|\cdot\|_{q} \le \|\cdot\|_1^{1-\frac{q-1}{q} \frac{\sigma(d+2)}{\sigma(d+2)-d}} \|\cdot \|^{\frac{q-1}{q} \frac{\sigma(d+2)}{\sigma(d+2)-d}}_{\frac{\sigma(d+2)}{d}},
$$
which is valid for all $1\le q\le \sigma(d+2)/d$, to obtain
\begin{equation}\label{est-q}
\begin{split}
\int_Q (m+1)^q \dx \dt &\le \|m+1\|_{L^1(Q)}^{q-\frac{\sigma(d+2)(q-1)}{\sigma(d+2)-d}} \|m+1 \|^{\frac{\sigma(d+2)(q-1)}{\sigma(d+2)-d}}_{L^{\frac{\sigma(d+2)}{d}}(Q)}\\
&\le C\left(\int_Q (m+1)^{\frac{\sigma(d+2)}{d}}\dx \dt \right)^{\frac{d(q-1)}{\sigma(d+2)-d}}\\
&\le E^{\frac{(q-1)(d+2)}{\sigma(d+2)-d}},
\end{split}
\end{equation}
where for the last inequality we used \eqref{sigma-est}.

Next, we use \eqref{est-q} to handle the right hand side of \eqref{3.35}. Assuming that
\begin{equation}\label{10.1}
2s_0+1\le \frac{\sigma(d+2)}{d}
\end{equation}
we get from \eqref{est-q}
\begin{equation}\label{10.2}
\int_{Q}(m+1)^{2s_0+1}\dx \dt \le C E^{\frac{2s_0(d+2)}{\sigma(d+2)-d}}\le C\left(1+ \frac14 E\right),
\end{equation}
provided that
\begin{equation}\label{JF1}
\frac{2s_0 (d+2)}{\sigma(d+2)-d} <1,
\end{equation}
which by using of \eqref{ddsig} can be shown to be equivalent to \eqref{restr1}.
Notice that \eqref{JF1} directly implies the validity of \eqref{10.1}.
Hence, we can absorb the first term on the right hand side of \eqref{3.35} to get
\begin{equation}\label{3.355}
\begin{split}
E\le C\left(1+\|\bu\|_{L^{\infty}(Q)}^2 \left(1+ \int_{Q} (m+1)^{2s_0-\sigma +2}\dx \dt\right)\right)
\end{split}
\end{equation}
In case that
\begin{equation}\label{nic}
2s_0-\sigma +2 \le 1 \quad \Longleftrightarrow \quad 2(s_0 +s) \le r,
\end{equation}
we can use \eqref{3.1} to conclude \eqref{3.F2} directly.
If \eqref{nic} is not true, we again use \eqref{est-q} to get from \eqref{3.355}
\begin{equation*}
\begin{split}
E\le C\left(1+\|\bu\|_{L^{\infty}(Q)}^2 E^{\frac{(2s_0-\sigma +1)(d+2)}{\sigma(d+2)-d}}\right)
\end{split}
\end{equation*}
which after using the Young inequality leads to \eqref{3.F2},
provided that
\begin{equation}\label{nic2}
\frac{(2s_0-\sigma +1)(d+2)}{\sigma(d+2)-d}< 1.
\end{equation}
Note that \eqref{nic2} is a stronger assumption that \eqref{nic} and it can be shown by using \eqref{ddsig} that \eqref{nic2} is equivalent to \eqref{restr2}. Hence the proof of \eqref{3.F2} is complete.
\end{proof}

\subsection{Estimates for $\bu$} This subsection is devoted to the uniform estimates on $\bu$, which will still depend on $m$. To be more precise, we want to show that for arbitrary $p>d+2$ we have the estimate
\begin{equation}\label{UUF}
\|\bu\|_{L^{\infty}(Q)}\le  C+C(p)\|m^{2s_0}\|^2_{L^p(Q)}.
\end{equation}
\begin{proof}[Proof of \eqref{UUF}]
We start with the estimates for below for the quantity
$$
w:=\sum_{i=1}^N u^i.
$$
It is not difficult to observe from \eqref{1.6}, \eqref{Le1} and the fact that $2s\le r$, that $w$ satisfies almost everywhere in $Q$
\begin{equation}\label{u1}
-\partial_t w - \Delta w = \sum_{i=1}^N L^i (\bv,m,\nabla \bu) \ge -C\left(|\nabla w|^2  + m^{2s_0} +1\right).
\end{equation}
Next, let us consider $w_1$ a solution to
\begin{equation}\label{w1}
-\partial_t w_1 - \Delta w_1 = -Cm^{2s_0}
\end{equation}
completed by zero initial condition, i.e., $w_1(T)=0$. Then by a standard parabolic estimate, we obtain that
\begin{equation}
\|w_1\|_{L^{\infty}(Q)} + \|\nabla w_1\|_{L^{\infty}(Q)} \le C(p)\|m^{2s_0}\|_{L^p(Q)}
\label{est-w1}
\end{equation}
whenever $p>d+2$. Then subtracting \eqref{w1} from \eqref{u1}, we obtain
\begin{equation}\label{u-w1}
\begin{split}
-\partial_t &(w-w_1) - \Delta (w-w_1)  \ge -C\left(|\nabla w|^2+1\right)\\
&\ge -C|\nabla (w-w_1)|^2 -C(1+\|\nabla w_1\|_{\infty}^2).
\end{split}
\end{equation}
Hence from the theory for subsolutions to parabolic equation, see \cite{LaUr68}, we obtain
$$
w-w_1\ge - C(T)\max \{\|w(T)\|_{\infty}, (1+\|\nabla w_1\|^2_{L^{\infty}(Q)})\},
$$
which together with \eqref{est-w1} and the assumption that $\bu(T)\in L^{\infty}$ leads to the final estimate from below
\begin{equation}\label{Wfinal}
w\ge -C-C(p)\|m^{2s_0}\|^2_{L^p(Q)},
\end{equation}
which is valid for arbitrary $p>d+2$.

Next, we focus on estimates from above. Keeping the notation for $w$, we can derive from \eqref{1.6} that
$$
-\partial_t(u^i-\varepsilon_0w) - \Delta (u^i-\varepsilon_0 w) = L^i(m,\bv,\nabla \bu) - \varepsilon_0 \sum_{j=1}^N L^j(m,\bv,\nabla \bu).
$$
Hence, using \eqref{Le2}, we get
\begin{equation}
\begin{split}
&-\partial_t(u^i-\varepsilon_0w) - \Delta (u^i-\varepsilon_0 w)\le
C(1+m^{2s_0})\\
&\quad + C \left(1+\frac{m^{2s}+1}{m^r+1}\right)\left|\nabla\left(u^i- \varepsilon_0 w\right)\right|^2\\
&\le C(1+m^{2s_0})+ C \left|\nabla\left(u^i- \varepsilon_0 w\right)\right|^2,
\end{split}\label{subsol}
\end{equation}
where the second inequality follows from the assumption $2s\le r$.  Thus, we can repeat step by step the procedure for $w$ and using the fact that $\bu(T)\in L^{\infty}(\Omega)$, we obtain
\begin{equation}\label{Ufinal}
u^i-\varepsilon_0 w\le  C+C(p)\|m^{2s_0}\|^2_{L^p(Q)}.
\end{equation}

Finally, we derive the uniform bound \eqref{UUF}. First, summing \eqref{Ufinal} over $i=1,\ldots, N$ we have
\begin{equation*}
(1-N\varepsilon_0) w\le  CN+C(p)N\|m^{2s_0}\|^2_{L^p(Q)}.
\end{equation*}
Since $\varepsilon_0<1/(2N)$, we can combine this estimate with \eqref{Wfinal} to get
\begin{equation}\label{modW}
|w|\le  C+C(p)\|m^{2s_0}\|^2_{L^p(Q)}.
\end{equation}
Consequently, it follows from  \eqref{Ufinal} that
\begin{equation}\label{Ufinal2}
u^i\le  C+C(p)\|m^{2s_0}\|^2_{L^p(Q)}.
\end{equation}
Finally to obtain also estimate from below for $u^i$, we use \eqref{modW} and \eqref{Ufinal2} and get
$$
u^i=w-\sum_{j\neq i}u^j \overset{\eqref{modW},\eqref{Ufinal2}}\ge   C+C(p)\|m^{2s_0}\|^2_{L^p(Q)},
$$
which together with \eqref{Ufinal2} implies the desired estimate \eqref{UUF}. Hence the proof is complete.
\end{proof}

\subsection{Uniform $L^{\infty}$ bounds}
This subsection is devoted to the uniform bound for $\bu$, which directly implies the part of uniform estimates stated in \eqref{T-est}. Here, we combine \eqref{est-q}, \eqref{3.F2} and \eqref{UUF} to obtain the final bound. We go back to  \eqref{UUF} and estimate the right hand side. Although we need to choose $p>d+2$,  we formally provide all computation for $p=d+2$ and in the final restriction on the size of $s_0$ (or $\sigma$) we just use the strict inequality sign. Hence, we need to estimate the term on the right hand side of \eqref{UUF}, i.e., the integral
\begin{equation}
\begin{split}
\|m^{2s_0}\|_{d+2}^{2} = \left(\int_{Q}m^{2s_0(d+2)}\dx \dt\right)^{\frac{2}{d+2}}.
\end{split}
\end{equation}
If $2s_0(d+2) <1$ then the integral on the right hand side of \eqref{UUF} is bounded due to \eqref{3.1} and therefore we immediately get
\begin{equation}
\|\bu\|_{L^{\infty}(Q)}\le C(\|\bu_0\|_{\infty}, \|m_0\|_{\infty}).\label{infty-1}
\end{equation}
Hence, in what follows, we assume the opposite case. Assuming that
\begin{equation}\label{res1}
2s_0(d+2)<\sigma(d+2)/d \Leftrightarrow r+1 > 2ds_0+2s,
\end{equation}
we can use \eqref{est-q} with $q:=2s_0(d+2)$ and we deduce
\begin{equation}\label{est-qU}
\begin{split}
\left(\int_Q (m+1)^{2s_0(d+2)} \dx \dt\right)^{\frac{2}{d+2}} &\le C E^{\frac{(2s_0(d+2)-1)_+2}{\sigma(d+2)-d}}
\end{split}
\end{equation}
and it follows from \eqref{UUF} that
\begin{equation}\label{est-qUu}
\begin{split}
\|\bu\|_{L^{\infty}(Q)}&\le C \left(1+E^{\frac{(2s_0(d+2)-1)_+2}{\sigma(d+2)-d}}\right).
\end{split}
\end{equation}
Inserting this estimate into the right hand side of \eqref{3.F2}, we also deduce
\begin{equation}\label{est-qUuu}
\begin{split}
\|\bu\|_{L^{\infty}(Q)}&\le C \left(1+\|\bu\|_{L^{\infty}(Q)}^{\frac{4(2s_0(d+2)-1)_+}{\sigma(d+2)-d-(2s_0-\sigma+1)_+(d+2)}}\right).
\end{split}
\end{equation}
Hence, in case
\begin{equation}\label{incase}
\frac{4(2s_0(d+2)-1)_+}{\sigma(d+2)-d-(2s_0-\sigma +1)_+(d+2)} <1
\end{equation}
we can absorb the right hand side by left hand side and to obtain
\begin{equation}\label{est-qUuu2}
\begin{split}
\|\bu\|_{L^{\infty}(Q)}&\le C (\|\bu(T)\|_{\infty}, \|m_0\|_{\infty}),
\end{split}
\end{equation}
which implies a part of \eqref{T-est}. Notice that \eqref{incase} is a stronger assumption than \eqref{nic2} and therefore all needed assumptions, i.e., the assumptions \eqref{JF1} and \eqref{incase}, are already encoded in \eqref{ffres}. Furthermore, using \eqref{3.35} and \eqref{3.F2}, we obtain also the bound for $m$ and $\nabla m$ stated in \eqref{T-est}. Finally, from \eqref{A4}, we deduce the bound for term with $m(m^r+1)|\bv|^2$ in \eqref{T-est}.

\subsection{Uniform estimates for $\nabla \bu$}
This subsection is devoted to the last remaining part of \eqref{T-est}, i.e., the part of the estimate with $\nabla \bu$. Keeping the notation from the previous sections, we start with estimates for $\nabla w$. Using \eqref{Le1} andt the fact that $2s\le r$ we have
\begin{equation}\label{ww1}
-\partial_t w - \Delta w \ge (m^r+1)|\bv|^2 -C\left(|\nabla w|^2  + m^{2s_0} +1\right).
\end{equation}
Next, we multiply \eqref{ww1} by $e^{-2Cw}\ge 0$, integrate over $Q$ and use integration by parts to obtain
\begin{equation}\label{ww10}
\begin{split}
\int_Q &(m^r+1)|\bv|^2e^{-2Cw}  +2Ce^{-2Cw}|\nabla  w|^2 \dx \dt \\
&\le \frac{1}{2C} \int_Q\partial_t  e^{-2Cw}  +C\left(|\nabla w|^2  + m^{2s_0} +1\right)e^{-2Cw}\dx \dt.
\end{split}
\end{equation}
Thus, we see that we can absorb the term with $\nabla w$ by the left hand side and due to the $L^{\infty}$ bound for $\bu$, see \eqref{est-qUuu2}, and $L^{2s_0(d+2)}$ bound for $m$, see \eqref{est-qU}, we deduce from \eqref{ww10} that
\begin{equation}\label{ww11}
\begin{split}
\int_Q &(m^r+1)|\bv|^2 + |\nabla  w|^2 \dx \dt \le C(\|\bu_T\|_{\infty}, \|m_0\|_{\sigma}).
\end{split}
\end{equation}
Notice that the first in \eqref{ww11} together with \eqref{A4} leads to the estimate \eqref{T-est} for term involving $|\bv|^2$.

Next, we use the inequality \eqref{subsol}, i.e.,
\begin{equation}
\begin{split}
&-\partial_t(u^i-\varepsilon_0w) - \Delta (u^i-\varepsilon_0 w)\le C(1+m^{2s_0})+ C \left|\nabla\left(u^i- \varepsilon_0 w\right)\right|^2,
\end{split}\label{subsola}
\end{equation}
which we multiply by $e^{2C(u^i-\varepsilon_0w)}$ and integrate over $Q$. Repeating step by step the procedure \eqref{ww10}--\eqref{ww11} and using uniform bounds on $\bu$ and $m$, we get
\begin{equation}
\int_Q |\nabla (u^i-\varepsilon_0w)|^2 \dx \dt \le C(\|\bu_T\|_{\infty}, \|m_0\|_{\sigma}). \label{nablaq}
\end{equation}
Finally, combining \eqref{ww11} and \eqref{nablaq}, we have for all $i=1,\ldots,N$
$$
\begin{aligned}
\int_{Q}|\nabla u^i|^2 \dx\dt &\le 2\int_{Q} |\nabla (u^i-\varepsilon_0w)|^2 + \varepsilon_0^2|\nabla w|^2\dx \dt \\
&\le C(\|\bu_T\|_{\infty}, \|m_0\|_{\sigma}),
\end{aligned}
$$
which finishes the proof of \eqref{T-est}.

\section{Existence of solution - Proof of Theorem~\ref{T2}}\label{S.ex}
This section is devoted to the proof of the existence of a solution to \eqref{1.2}, \eqref{1.6} and \eqref{1.81}. Notice that due to the assumption \eqref{1.11} and \eqref{1.10}, we know that \eqref{1.10} is equivalent to
$$
\bv=\bw(m,\nabla \bu)
$$
with a Carath\'{e}odory mapping $\bw$. Therefore we can omit \eqref{1.81} and replace $\bv$ by $\bw(m,\nabla \bu)$ in \eqref{1.2} and \eqref{1.6} and solve the problem only for unknowns $(m,\bu)$. In fact, this is also the way how one can get the existence of a solution to an approximative problem. Nevertheless, for sake of simplicity and to simplify the notation, we keep writing $\bv$ in what follows.

Second, we do not provide the complete and rigorous proof here. We rather emphasize those steps that are different from the known procedure for Bellman systems. To be more specific, we provide here the proof of weak sequential stability, which is the key property of the system of equations we have in mind. It means that we shall consider a sequence of $(m^n,\bu^n, \bv^n)$ of smooth solutions to \eqref{1.2}, \eqref{1.6} and \eqref{1.81} (which is however equivalent to \eqref{ff}, once the mapping $\bw$ is well defined) and corresponding sequence of initial data
\begin{equation}\label{init-n}
\begin{aligned}
\bu_T^n &\to \bu_T && \textrm{strongly in } L^1(\Omega; \mathbb{R}^N),\\
\bu_T^n &\rightharpoonup^* \bu_T && \textrm{weakly$^*$ in } L^{\infty}(\Omega; \mathbb{R}^N),\\
m_0^n &\to m_0 && \textrm{stongly in } L^{\sigma}(\Omega)
\end{aligned}
\end{equation}
with nonnegative $m_0^n$. Our goal is to show that
\begin{equation}
(m^n, \bu^n, \bv^n) \to (m,\bu,\bv), \label{goal}
\end{equation}
strongly in  $L^1(Q)\times L^1(0,T; W^{1,2}(\Omega;\mathbb{R}^N)) \times L^1(Q;\mathbb{R}^{NM})$,
where the triple $(m,\bu,\bv)$ solves again \eqref{1.2}, \eqref{1.6} and \eqref{1.81} with initial data $(\bu_T, m_0)$. Indeed such a result the suggest that the rigorous existence proof is doable. Indeed, approximating Hamiltonians $H$ by a sequence of bounded functions $\{H^n\}_{n=1}^{\infty}$ and similarly $\bg$ by a sequence of bounded $\{\bg^n\}_{n=1}^{\infty}$, one may consider that for a such approximative system it is classical to obtain the existence of solution $(m^n,\bu^n,\bv^n)$ and the only remaining part of the proof is then the weak sequential stability. For details, how one can approximate Hamiltonians properly, we refer to \cite{BeBuFr12}.

\subsection{Uniform a~priori estimates}

I this part we just use the result of Theorem~\ref{T1}, which holds for sufficiently smooth solutions. Indeed, we may assume that
\begin{equation}\label{T-est2}
\begin{split}
&\sup_{t\in (0,T)} \left(\|m^n(t)\|_{\sigma} + \|\bu^n(t)\|_{\infty}\right)+  \int_{Q}|\nabla \bu^n|^2 + (m^n+1)^{\sigma -2}|\nabla m^n|^2\dx \dt \\
&\quad + \int_Q((m^n)^{r+1}+1)|\bv^n|^2  + (m^n)^{2s_0(d+2)} \dx \dt \\
&\qquad \le C(\|\bu^n_T\|_{\infty}, \|m^n_0\|_{\sigma})\le C
\end{split}
\end{equation}
such that $(m^n,\bu^n,\bv^n)$ satisfies for all $\varphi \in \mathcal{C}^{\infty}_0(-\infty;T; W^{1,\infty}_{per}(\Omega))$
\begin{equation}
\label{1.2P}
\int_Q -m^n \partial_t \varphi + \nabla  m^n \cdot \nabla \varphi -  m^n \bg(\bv^n, m^n)\cdot \nabla \varphi \dx \dt =\int_{\Omega}m_0^n \varphi(0)\dx,
\end{equation}
for all $\varphi \in \mathcal{C}^{\infty}_0(0; \infty; W^{1,\infty}(\Omega))$
\begin{equation}\label{1.6P}
\begin{split}
&\int_Q\bu^n \partial_t \varphi + \nabla \bu^n \cdot \nabla \varphi \dx \dt -\int_{\Omega} \bu^n_T \varphi(T)\dx \\
&\qquad = \int_Q L(m^n,\bv^n, \nabla \bu^n)\varphi\dx \dt
\end{split}
\end{equation}
and almost everywhere in $Q$
\begin{equation}
\label{1.81P}
f^i_{v^i_{j}}(\bv^n,m^n)  + b_1(m^n) \sum_{k=1}^d \partial_{x_k} (u^n)^i A^i_{kj} =0  \qquad \textrm{ in } Q
\end{equation}
with $\bg$ given as
\begin{equation}\label{g-defP}
\bg(\bv^n,m^n) := \sum_{j=1}^N b_1(m^n)A^j(v^n)^j + \bb_0(m^n).
\end{equation}
and $L(m^n,\bv^n,\bu^n)$ given as
\begin{equation}\label{Pp1}
\begin{split}
L(m^n,\bv^n, \nabla \bu^n)&:= \bef(\bv^n,m^n) + m^n\bef_m(\bv^n,m^n)\\
&\qquad + \nabla \bu^n \left[ \bg(\bv^n,m^n) + m^n\bg_{m^n} (\bv^n,m^n)\right].
\end{split}
\end{equation}

Next, we focus on the estimate for the time derivatives. First, using \eqref{Le3} and uniform bounds \eqref{T-est}, we see that
\begin{equation}
\int_{Q}|L(m^n,\bv^n, \nabla \bu^n)|\dx \; dt \le C. \label{L1L}
\end{equation}
Consequently, we can deduce from \eqref{1.6P} and also from \eqref{T-est2} that for some $q>d$
\begin{equation}
\int_0^T \|\partial_t \bu\|_{(W^{1,q}_{per}(\Omega; \mathbb{R}^N))^*} \dt \le C.\label{ut}
\end{equation}
Similarly, using \eqref{2.6} and \eqref{T-est2}, we see that for some $q\in (1,\infty)$
\begin{equation}
\int_0^T \|\partial_t m\|^{q'}_{(W^{1,q}_{per}(\Omega))^*} \dt \le C.\label{mt}
\end{equation}

\subsection{Limit $n\to \infty$}
Having \eqref{T-est2}, \eqref{init-n} and \eqref{ut}--\eqref{mt} and using the Aubin--Lions lemma, we can find subsequences that we do not relabel, such that for some $q>d$
\begin{align}
\bu^n &\rightharpoonup \bu && \textrm{weakly in } L^2(0,T; W^{1,2}_{per}(\Omega; \mathbb{R}^N)), \label{cn1}\\
\bu^n &\rightharpoonup^* \bu && \textrm{weakly$^*$ in } L^{\infty}(Q; \mathbb{R}^N), \label{cn2}\\
\bu^n &\to \bu && \textrm{strongly in } L^{1}(Q; \mathbb{R}^N), \label{cn2.5}\\
\partial_t \bu^n &\rightharpoonup^* \partial_t \bu && \textrm{weakly$^*$ in } \mathcal{M}(0,T; (W^{1,q}_{per}(\Omega; \mathbb{R}^N))^*), \label{cn3}\\
m^n &\rightharpoonup  m && \textrm{weakly in } L^{2}(0,T; W^{1,2}_{per}(\Omega)), \label{cn4}\\
m^n &\to  m && \textrm{strongly in } L^{1}(Q), \label{cn4.5}\\
m^n &\rightharpoonup  m && \textrm{weakly in } L^{\frac{\sigma(d+2)}{d}}(Q), \label{cn5}\\
\partial_t m^n &\rightharpoonup \partial_t m && \textrm{weakly in } L^{q'}(0,T; (W^{1,q}_{per}(\Omega)^*), \label{cn7}\\
L(m^n,\bv^n, \nabla \bu^n) &\rightharpoonup^* L && \textrm{weakly$^*$ in } \mathcal{M}(Q), \label{cn8}
\end{align}
where $\mathcal{M}(K)$ denotes the space of Radon measures on $K$. Having \eqref{cn1}--\eqref{cn8}, we can use the theory for parabolic equations with $L^1$ or measure right hand side, see \cite{BoGa89,BoDaGaOr97}, and to conclude that
\begin{equation}\label{cn9}
\nabla \bu^n \to \nabla \bu \quad \textrm{ a.e. in }Q.
\end{equation}
Consequently, since the operator $T$ in \eqref{1.11}--\eqref{1.10} is strictly monotone, it follows from \eqref{cn4.5} and \eqref{cn9} that
\begin{equation}\label{cn10}
\bv^n \to \bv \quad \textrm{ a.e. in }Q,
\end{equation}
which in particular also implies (note that $L$ and $\bg$ are Carath\'{e}odory)
\begin{align}
L(m^n,\bv^n, \nabla \bu^n) &\to L(m,\bv, \nabla \bu) &&\textrm{ a.e. in }Q,\label{cn11}\\
\bg(\bv^n, m^n) &\to \bg(\bv, m) &&\textrm{ a.e. in }Q. \label{cn12}
\end{align}
Thus, the convergence results \eqref{cn1}--\eqref{cn12} allows us to let $n\to \infty$ in \eqref{1.2P}--\eqref{g-defP} to obtain that $(m,\bv,\bu)$ solves \eqref{wf-m}, \eqref{wf-m} and
\begin{equation}\label{1.6PF}
\begin{split}
&\int_Q\bu \partial_t \varphi + \nabla \bu \cdot \nabla \varphi \dx \dt -\int_{\Omega} \bu_T \varphi(T)\dx  = \int_Q L\varphi\dx \dt
\end{split}
\end{equation}
for all $\varphi \in \mathcal{C}^{\infty}_0(0,\infty; W^{1,\infty}(\Omega))$. Thus, to finish the proof of Theorem~\ref{T2}, we need to show that
\begin{equation}
L=L(m,\bv, \nabla \bu) \textrm{ a.e. in }Q. \label{finish}
\end{equation}
Indeed, having \eqref{finish}, we can read from the equation that the time derivative of $\bu$ is better, i.e., that
$$
\bu \in L^1(0,T; (W^{1,q}_{per}(\Omega; \mathbb{R}^N))^*)
$$
for some $q>d$ and integrating by parts with respect to the time variable $t$ in \eqref{1.6PF}, we find that \eqref{wf-u} holds true.

\subsection{Identification of $L$ - proof of \eqref{finish}}
This last part is devoted to the proof of \eqref{finish}. We follow the procedure developed in \cite{BeBuFr12} with the necessary changes due to the presence of the mean field variable $m$. We also proceed here slightly formally ad refer the interested reader to \cite{BeBuFr12}, where the very similar  procedure is made rigorously. Defining
$$
w^n:=\sum_{i=1}^N (u^n)^i,
$$
it follows from \eqref{1.6P} that in the sense of distributions we have
\begin{equation}\label{1.6Pwn}
\begin{split}
&-\partial_t w^n - \Delta w^n = \sum_{i=1}^N  L(m^n,\bv^n, \nabla \bu^n).
\end{split}
\end{equation}
Next, we multiply \eqref{1.6Pwn} by
$$
e^{-2Cw^n}
$$
and obtain
\begin{equation}\label{1.6Pwne}
\begin{split}
&\frac{1}{2C}\partial_t e^{-2Cw^n} + \frac{1}{2C}\Delta e^{-2Cw^n} \\
&\quad = \sum_{i=1}^N  L(m^n,\bv^n, \nabla \bu^n)e^{-2Cw^n} + 2Ce^{-2Cw^n}|\nabla w^n|^2.
\end{split}
\end{equation}
Consequently, multiplying the resulting identity by arbitrary nonnegative  $\varphi\in W^{1,1}_0(0,T: L^{\infty}(\Omega)\cap W^{1,2}_{per}(\Omega))$ and integrating the result over $Q$, we get by using integration by parts that
\begin{equation}\label{1.6Pwnei}
\begin{split}
&-\frac{1}{2C}\int_{Q} e^{-2Cw^n}\partial_t \varphi + \nabla  e^{-2Cw^n}\cdot \nabla \varphi \dx \dt \\
&\quad = \int_Q \varphi(\sum_{i=1}^N  L(m^n,\bv^n, \nabla \bu^n)e^{-2Cw^n} + 2Ce^{-2Cw^n}|\nabla w^n|^2).
\end{split}
\end{equation}
Next, we let $n\to \infty$. Thanks to \eqref{cn1}, \eqref{cn2} and \eqref{cn2.5}, it is easy to deduce that
\begin{equation}\label{1.6Pwneij}
\begin{split}
&\lim_{n\to \infty} \int_{Q} e^{-2Cw^n}\partial_t \varphi + \nabla  e^{-2Cw^n}\cdot \nabla \varphi \dx \dt \\
& = \int_{Q} e^{-2Cw}\partial_t \varphi + \nabla  e^{-2Cw}\cdot \nabla \varphi \dx \dt
\end{split}
\end{equation}
Next, using \eqref{Le1}, we see that
$$
\sum_{i=1}^N  L(m^n,\bv^n, \nabla \bu^n)e^{-2Cw^n} + 2Ce^{-2Cw^n}|\nabla w^n|^2 \ge -C ((m^n)^{2s_0}+1).
$$
Consequently, we see that the right hand side of \eqref{1.6Pwnei} is bounded from below by a strongly convergent function, so using the point-wise convergence result \eqref{cn11} and the Fatou lemma, we get
\begin{equation}\label{1.6Pwnep}
\begin{split}
&\liminf_{n\to \infty} \int_Q \varphi(\sum_{i=1}^N  L(m^n,\bv^n, \nabla \bu^n)e^{-2Cw^n} + 2Ce^{-2Cw^n}|\nabla w^n|^2)\\
&\ge \quad \int_Q \varphi(\sum_{i=1}^N  L(m,\bv, \nabla \bu)e^{-2Cw} + 2Ce^{-2Cw}|\nabla w|^2).
\end{split}
\end{equation}
Hence, combining \eqref{1.6Pwneij} and \eqref{1.6Pwnep}, we see that for all nonnegative $\varphi\in W^{1,1}_0(0,T: L^{\infty}(\Omega)\cap W^{1,2}_{per}(\Omega))$ there holds
\begin{equation}\label{1.6nee}
\begin{split}
&-\frac{1}{2C}\int_{Q} e^{-2Cw}\partial_t \varphi + \nabla  e^{-2Cw}\cdot \nabla \varphi \dx \dt \\
&\quad \ge \int_Q \varphi(\sum_{i=1}^N  L(m,\bv, \nabla \bu)e^{-2Cw} + 2Ce^{-2Cw}|\nabla w|^2).
\end{split}
\end{equation}
Finally, taking $\varphi:=e^{2Cw}\psi$ with arbitrary $\psi \in W^{1,1}_0(0,T; L^{\infty}(\Omega)\cap W^{1,2}_{per}(\Omega))$, we deduce that\footnote{In fact we must mollify the test function with respect to the time variable and then to pas to the limit. Since such a procedure was explained in details in \cite{BeBuFr12}, we do not provide the complete proof here.}
\begin{equation}\label{okn}
\begin{split}
&\int_{Q} w\partial_t \psi  + \nabla w \cdot \psi \dx \dt \ge \int_Q \psi(\sum_{i=1}^N  L(m,\bv, \nabla \bu))\psi \dx \dt,
\end{split}
\end{equation}
which implies that in the sense of distributions
\begin{equation}\label{oknn}
\begin{split}
&-\partial_t w - \Delta w \ge  \sum_{i=1}^N  L(m,\bv, \nabla \bu).
\end{split}
\end{equation}

Similarly, we deduce the opposite type inequalities. It follows from \eqref{1.6P} that for all $i=1,\ldots,N$
\begin{equation}\label{1.6Ps}
\begin{split}
&-\partial_t((u^i)^n - \varepsilon_0 w^n)  -\Delta ((u^i)^n - \varepsilon_0 w^n) \\
&\qquad = L^i(m^n,\bv^n, \nabla \bu^n)-\varepsilon_0\sum_{i=1}^N L^i(m^n,\bv^n, \nabla \bu^n).
\end{split}
\end{equation}
Denoting $z^n:=(u^i)^n - \varepsilon_0 w^n$ ad multiplying \eqref{1.6Ps} by $e^{2Cz^n}$ we get
\begin{equation}\label{1.6Pss}
\begin{split}
&-\frac{1}{2C}\partial_te^{2Cz^n}  -\frac{1}{2C}\Delta e^{2Cz^n} \\
&= e^{2Cz^n}\left(L^i(m^n,\bv^n, \nabla \bu^n)-\varepsilon_0\sum_{i=1}^N L^i(m^n,\bv^n, \nabla \bu^n)\right) -e^{2Cz^n}|\nabla z^n|^2.
\end{split}
\end{equation}
Hence, using \eqref{Le2}, we see that the right hand side is bounded by an convergent sequence and therefore we can proceed similarly as before by using the Fatou lemma and to obtain
\begin{equation}\label{1.6Pok}
\begin{split}
&-\partial_t(u^i - \varepsilon_0 w)  -\Delta (u^i - \varepsilon_0 w) \\
&\qquad \le  L^i(m,\bv, \nabla \bu)-\varepsilon_0\sum_{i=1}^N L^i(m,\bv, \nabla \bu).
\end{split}
\end{equation}
Thus summing with respect to $i=1,\ldots,N$ and dividing by $(1-\varepsilon_0)$ we see that
\begin{equation*}
\begin{split}
&-\partial_t w  -\Delta w \\
&\qquad \le \sum_{i=1}^N L^i(m,\bv, \nabla \bu),
\end{split}
\end{equation*}
which combined with \eqref{oknn} gives
\begin{equation}\label{1.6huhu}
\begin{split}
&-\partial_t w  -\Delta w = \sum_{i=1}^N L^i(m,\bv, \nabla \bu)
\end{split}
\end{equation}
and consequently also we obtain from \eqref{1.6Pok} that
\begin{equation}\label{1.6say}
\begin{split}
&-\partial_t u^i   -\Delta  u^i \le  L^i(m,\bv, \nabla \bu).
\end{split}
\end{equation}
Finally using \eqref{1.6huhu} and \eqref{1.6say}, we get
\begin{align*}
-\partial_t u^i   -\Delta  u^i &= -\partial_t w   -\Delta  w + \sum_{j\neq i}(\partial_t u^j   +\Delta  u^j)\\
&\ge \sum_{j=1}^N L^j(m,\bv, \nabla \bu) - \sum_{j\neq i} L^j(m,\bv, \nabla \bu)\\
&=L^i(m,\bv, \nabla \bu).
\end{align*}
Hence, \eqref{1.6say} implies that
\begin{equation}\label{1.6say2}
\begin{split}
&-\partial_t u^i   -\Delta  u^i =  L^i(m,\bv, \nabla \bu)
\end{split}
\end{equation}
and \eqref{wf-u} follows. This completes the proof of Theorem~\ref{T2}.

\def\cprime{$'$}
\providecommand{\bysame}{\leavevmode\hbox to3em{\hrulefill}\thinspace}
\providecommand{\MR}{\relax\ifhmode\unskip\space\fi MR }
\providecommand{\MRhref}[2]{%
  \href{http://www.ams.org/mathscinet-getitem?mr=#1}{#2}
}
\providecommand{\href}[2]{#2}

\end{document}